\theoremstyle{plain}
\newtheorem{Theorem}{Theorem}[section]
\newtheorem{theorem0}{Thorem}
\newtheorem{Proposition}[Theorem]{Proposition}
\newtheorem{Proposition-Definition}[Theorem]{Proposition-Definition}
\newtheorem{Lemma}[Theorem]{Lemma}
\theoremstyle{definition} 
\newtheorem{Definition}[Theorem]{Definition}
\newtheorem{Remark}[Theorem]{Remark}
\newtheorem{Example}[Theorem]{Example}
\newtheorem{Observation}[Theorem]{Observation}
\def\A{{\mathcal{A}}}
\def\M{{\mathcal{M}}}
\def\Z{{\mathbb{Z}}}
\def\N{{\mathbb{N}}}
\def\Q{{\mathbb{Q}}}
\def\x{{\mathbf{x}}}
\def\y{{\mathbf{y}}}
\def\z{{\mathbf{z}}}
\def\a{{\mathbf{a}}}
\def\b{{\mathbf{b}}}
\def\c{{\mathbf{c}}}
\def\e{{\mathbf{e}}}
\def\m{{\mathbf{m}}}
\def\initial{\mathop{\mathrm{in}}\nolimits}
\def\min{\mathop{\mathrm{min}}\nolimits}
\def\mv{\mathop{\mathrm{mv}}\nolimits}
\def\max{\mathop{\mathrm{max}}\nolimits}
\def\Ker{\mathop{\mathrm{Ker}}\nolimits}
\def\deg{\mathop{\mathrm{deg}}\nolimits}
\def\initial{\mathop{\mathrm{in}}\nolimits}
\def\reg{\mathop{\mathrm{reg}}\nolimits}
\providecommand{\abs}[1]{\lvert#1\rvert}
\title{Toric ideals for high Veronese subrings of toric algebras}
\author{Takafumi Shibuta}
\address{
\begin{flushleft}
	\hspace{0.3cm} Department of Mathematics, Rikkyo University, Nishi-Ikebukuro, Tokyo 171-8501, Japan\\
	\hspace{0.3cm} JST, CREST, Sanbancho, Chiyoda-ku, Tokyo, 102-0075, Japan\\
\end{flushleft}
}
\email{shibuta@rikkyo.ac.jp}
\date{}
\begin{document}
\begin{abstract}
We prove that the defining ideal of a sufficiently high Veronese subring of a toric algebra 
admits a quadratic Gr\"obner basis consisting of binomials. 
More generally, we prove that the defining ideal of a sufficiently high Veronese subring 
of a standard graded ring admits a quadratic Gr\"obner basis. 
We give a lower bound on $d$ such that the defining ideal of $d$-th Veronese subring admits a quadratic Gr\"obner basis. 
Eisenbud--Reeves--Totaro stated the same theorem without a proof with some lower bound on $d$. 
In many cases, our lower bound is less than Eisenbud--Reeves--Totaro's lower bound. 
\end{abstract}
\maketitle
\section{Introduction}
We denote by $\N=\{0,1,2,3,\dots\}$ the set of non-negative integers. 
For a multi-index $\a=(a_1,\dots,a_r)\in \N^r$ and variables $\x=x_1,\dots,x_r$, 
we write $\x^\a={x}_1^{a_1}\cdots {x}_r^{a_r}$ and $\abs{\a}=a_1+\dots+a_r$. 
For a given positive integer $d$, we set $\N_d^r=\{\a\in \N^r\mid \abs{\a}=d\}$. 
We denote by $\e_i$ the vector with $1$ in the $i$-th position and zeros elsewhere. 
In this paper, ``quadratic" means ``of degree at most two". 

Let $B=\bigoplus_{i\in \N}B_i$ be a standard $\N$-graded ring (that is, $B$ is generated by $B_1$ over $B_0$ as an algebra) with $B_0=K$ a field. 
For $d\in \N$, we call $B^{(d)}=\bigoplus_{i\in \N}B_{di}$ the {\it $d$-th Veronese subring} of $B$. 
In this paper, we investigate Gr\"obner bases of the defining ideal of $B^{(d)}$. 
We say that a homogeneous ideal admits quadratic Gr\"obner basis 
if there exists a Gr\"obner basis consisting of homogeneous polynomials of degree at most $2$ with respect to some term order. 

We call a finite collection $\A=\{\m^{(1)},\dots,\m^{(s)}\}\subset \Z^n$, $\m^{(i)}=(m^{(i)}_1,\dots,m^{(i)}_n)$,
a {\it configuration} if there exists a vector 
$0\neq \boldsymbol{\lambda}=(\lambda_1,\dots,\lambda_n)\in \Q^n$ 
such that $ \boldsymbol{\lambda}\cdot \m^{(i)}=\sum_j \lambda_j \cdot m^{(i)}_j=1$ for all $i$. 
We denote by $K[\A]$ the standard $\N$-graded $K$-algebra $K[\z^{\m^{(1)}},\dots,\z^{\m^{(s)}}]$. 
For a configuration $\A$, let $\phi_\A$ be the ring homomorphism 
\begin{eqnarray*}
\phi_\A:~ K[{y}_1,\dots, {y}_s]&\to& K[{z}_1^{\pm 1},\dots, {z}_n^{\pm 1}]\\
{y}_i &\mapsto& \z^{\m^{(i)}}=\prod_{j=1}^{n}{z}_j^{m^{(i)}_j}. 
\end{eqnarray*}
We denote $\Ker{\phi_\A}$ by $P_\A$ and call it a {\it toric ideal} of $\A$. 
It is known that the toric ideal $P_\A$ is a homogeneous ideal generated by the binomials $u-v$ 
where $u$ and $v$ are monomials of $K[{y}_1,\dots, {y}_s]$ with $\phi_\A(u)=\phi_\A(v)$. 
We consider the toric ideal $P_{\A^{(d)}}$ 
which is the defining ideal of the $d$-th Veronese subring $K[\A]^{(d)}=K[\A^{(d)}]$ of $K[\A]$ where 
\[
\A^{(d)}=\{a_1\m^{(1)}+\dots+a_s\m^{(s)}\mid (a_1,\dots,a_s)\in\N_d^s \}. 
\]
We will prove the following theorem. 
\begin{theorem0}[Theorem \ref{main toric}]\label{main toric intro}
$P_{\A^{(d)}}$ admits a quadratic Gr\"obner basis for all sufficiently large $d$. 
\end{theorem0}
We prove this theorem in the more general situation: 
Let $S=K[{y}_1,\dots,{y}_s]$ be a standard graded polynomial rings over a field $K$, 
and $I$ a homogeneous ideal of $S$. 
Let $R^{[d]}=K[{x}_\a\mid \a\in \N_d^s]$ be a polynomial ring whose variables 
correspond to the monomials of degree $d$ in $S$, and $\phi_d:R^{[d]}\to S$ a ring homomorphism $\phi_d({x}_\a)=\y^\a$. 
Then $R^{[d]}/\phi_d^{-1}(I)\cong (S/I)^{(d)}$. 
The main result of this paper is the next theorem. 
\begin{theorem0}[Theorem \ref{main}]\label{main intro}
Let $I\subset S$ be a homogeneous ideal, and $\prec$ a term order on $S$. 
Let $\{\y^{\a^{(1)}},\dots,\y^{\a^{(r)}}\}$, 
$\a^{(i)}=(a_{i1}, \dots, a_{is})\in \N^s$, be the minimal system of generators of $\initial_\prec(I)$. 
Then $\phi_d^{-1}(I)$ admits a quadratic Gr\"obner basis if 
\[
d\ge s(\max\{a_{ij}\mid 1\le i\le r, 1\le j\le s\}+1)/2. 
\]
\end{theorem0}
Note that Theorem \ref{main intro} implies Theorem \ref{main toric intro}. 
Eisenbud--Reeves--Totaro \cite{ERT} proved that in the case where the coordinates $y_1,\dots,y_s$ of $S$ are generic, 
$\phi_d^{-1}(I)$ admits quadratic Gr\"obner basis for $d\ge \reg(I)/2$. 
Our lower bound $s(\max\{a_{ij}\mid 1\le i\le r, 1\le j\le s\}+1)/2$ seems large compared with $\reg(I)/2$, but is easy to compute. 
Let $\delta(\initial_\prec(I))=\max\{a_{i1}+\dots+a_{is}\mid 1\le i\le r\}$. 
Eisenbud--Reeves--Totaro gave a easily computable rough lower bound $(s\delta(\initial_\prec(I))-s+1)/2$ 
(with $\prec$ and coordinates $y_1,\dots,y_s$ chosen so that $\delta(\initial_\prec(I))$ is minimal). 
In many cases, our lower bound is less than Eisenbud--Reeves--Totaro's rough lower bound. 
Since coordinate transformation does not preserve the property that an ideal is generated by binomials, 
we can not use generic coordinates to prove Theorem \ref{main toric intro}. 
Our proof does not need any coordinate transformation. 
Eisenbud--Reeves--Totaro stated that the assertion of Theorem \ref{main intro} holds true for $d\ge s\lceil \delta(\initial_\prec(I))/2\rceil$ without a proof (see \cite{ERT} comments after Theorem 11). 
Our lower bound is often less than Eisenbud--Reeves--Totaro's lower bound $s\lceil \delta(\initial_\prec(I))/2\rceil$.

One of the reasons why we are interested in whether a given homogeneous ideal admits quadratic Gr\"obner basis is 
that this is a sufficient condition for the residue ring to be a {\it homogeneous Koszul algebra}. 
We call a graded ring $B$ homogeneous Koszul algebra 
if its residue field has a linear minimal graded free resolution. 
Fr\"oberg proved that if $I$ is generated by monomials of degree two then $S/I$ is Koszul. 
Hence if $I\subset S=K[{y}_1,\dots,{y}_s]$ admits a quadratic initial ideal 
then $B=S/I$ is a homogeneous Koszul algebra by a deformation argument. 
Therefore Theorem \ref{main intro} implies the theorem of Backelin. 
\begin{Theorem}[Backelin \cite{B}]\label{Backelin}
A Veronese subring $B^{(d)}$ of a standard $\N$-graded ring $B=K[B_1]$ over a field $K$ 
is a homogeneous Koszul algebra for all sufficiently large $d\in \N$. 
\end{Theorem}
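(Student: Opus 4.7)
The plan is to obtain Backelin's theorem as a direct consequence of Theorem \ref{main intro}, together with Fröberg's theorem and the Gröbner deformation argument that the paper has already cited.

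First I would fix a presentation $B \cong S/I$, where $S = K[y_1,\dots,y_s]$ is the polynomial ring on a chosen $K$-basis of $B_1$ and $I = \Ker(S \twoheadrightarrow B)$ is a homogeneous ideal. By the setup recalled just before Theorem \ref{main intro}, there is a canonical identification $B^{(d)} \cong R^{[d]}/\phi_d^{-1}(I)$, so the statement about $B^{(d)}$ is reduced to a statement about the ideal $\phi_d^{-1}(I) \subset R^{[d]}$.

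Next, choose any term order $\prec$ on $S$ and let $M = \max\{a_{ij} \mid 1 \le i \le r,\ 1 \le j \le s\}$ be formed from the minimal monomial generators of $\initial_\prec(I)$. For every integer $d \ge s(M+1)/2$, Theorem \ref{main intro} provides a term order on $R^{[d]}$ with respect to which $\phi_d^{-1}(I)$ admits a quadratic Gröbner basis. In particular, $\initial(\phi_d^{-1}(I))$ is a monomial ideal generated in degree at most $2$, so Fröberg's theorem implies that $R^{[d]}/\initial(\phi_d^{-1}(I))$ is a homogeneous Koszul algebra.

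Finally I would appeal to the standard Gröbner deformation: the weight order underlying the chosen term order on $R^{[d]}$ gives a flat one-parameter family over $K[t]$ whose generic fiber is $R^{[d]}/\phi_d^{-1}(I) \cong B^{(d)}$ and whose special fiber at $t=0$ is $R^{[d]}/\initial(\phi_d^{-1}(I))$. Upper semicontinuity of graded Betti numbers in this family yields $\beta_{i,j}^{B^{(d)}}(K,K) \le \beta_{i,j}^{R^{[d]}/\initial(\phi_d^{-1}(I))}(K,K)$; since the right-hand side vanishes for $j \neq i$ by Koszulness of the special fiber, so does the left-hand side, and $B^{(d)}$ is Koszul for every $d \ge s(M+1)/2$. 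The main subtlety in the argument is precisely this last step, because the Koszul property is measured by Betti numbers of the residue field over the algebra itself rather than over $R^{[d]}$; but this is the well-known deformation principle explicitly invoked in the paragraph preceding Theorem \ref{Backelin}, so no further work is required.
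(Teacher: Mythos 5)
Your proposal is correct and follows the same route the paper sketches in the paragraph preceding Theorem \ref{Backelin}: reduce $B^{(d)}$ to $R^{[d]}/\phi_d^{-1}(I)$, invoke Theorem \ref{main intro} for a quadratic Gr\"obner basis when $d$ is large, apply Fr\"oberg's theorem to the quadratic monomial initial ideal, and conclude by Gr\"obner deformation. The one caveat is that the inequality $\beta_{i,j}^{B^{(d)}}(K,K)\le\beta_{i,j}^{R^{[d]}/\initial(\phi_d^{-1}(I))}(K,K)$ compares Betti numbers of the residue field over two \emph{different} rings, so it rests on the spectral sequence of the filtered/associated-graded construction rather than on the literal ``upper semicontinuity of Betti numbers in a flat family'' for modules over a fixed ring; you flag this subtlety yourself, and the paper invokes the same ``deformation argument'' without further elaboration, so the two arguments match in both substance and level of detail.
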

In the case of $I=0$, Barcanescu and Manolache \cite{BaMa} proved that Veronese subrings of polynomial rings are Koszul. 
We also prove that the defining ideals of Veronese subrings of polynomial rings admit quadratic Gr\"obner bases with respect to 
a certain term order (Theorem \ref{quad GB}). 
See \cite{BGT}, \cite{ERT} and \cite{De Negri} for other term orders that give quadratic Gr\"obner bases. 
\section{Preliminaries on Gr\"obner bases}
Here, we recall the theory of Gr\"obner bases. 
See \cite{Cox1},\cite{Cox2} and \cite{Sturmfels} for details. 

Let $R=K[{x}_1,\dots,{x}_r]$ be a polynomial ring over a field $K$. 
The monomial $\x^\a$ in $R$ is identified with lattice point $\a\in\N^r$. 
A total order $\prec$ on $\N^r$ is a {\em term order} if the zero vector 
$0$ is the unique minimal element, and ${\a }\prec{\b }$ implies 
${\a +\c }\prec {\b +\c }$ for all $\a $, $\b $, $\c\in\N^r$. 
We define $\x^{\a }\prec \x^{\b}$ if ${\a }\prec{\b }$. 
We denote by $\M$ the set of all monomials of $R$. 
Let $\a ={}^{}( a_1,\ldots, a_r)$, 
$\b ={}^{}( b_1,\ldots, b_r)\in \N^r$. 
\begin{Definition}[lexicographic order] 
The {\em lexicographic order} $\prec_{lex}$ with $x_r\prec\dots\prec x_1$ is defined as follows: 
${\a }\prec_{lex} {\b }$ if 
$a _j{<}b _j~\mbox{where}~j=\min\{i\mid a _i\neq b _i\}$. 
\end{Definition}
\begin{Definition}[reverse lexicographic order] 
The {\em reverse lexicographic order} $\prec_{rlex}$ with $x_r\prec\dots\prec x_1$ is defined as follows: 
${\a }\prec_{rlex} {\b }$ if 
$\abs{\a}<\abs{\b}$ or $\abs{\a}=\abs{\b}$ and 
$a _j{>}b _j~\mbox{where}~j=\max\{i\mid a _i\neq b _i\}$. 
\end{Definition}
\begin{Definition}
Let $\prec$ be a term order on $R$, $f\in R$, and $I$ an ideal of $R$. 
The {\it initial term} $\initial_\prec(f)$ is the highest term of $f$ with respect to $\prec$. 
We call $ \initial_\prec(I)=\langle \initial_\prec(f)\mid f\in I\rangle$ the {\it initial ideal} of $I$ with respect to $\prec$. 
We say that a finite subset $G$ of $I$ is a {\it Gr\"obner basis} of $I$ 
with respect to $\prec$ if $\initial_\prec(I) = \langle \initial_\prec(g)\mid g\in G\rangle$. 
\end{Definition}
We give a criterion for a given finite subset of a toric ideal to be a Gr\"obner basis. 
\begin{Lemma}\label{toric GB}
Let $\prec$ be a term order of $R$, $P_\A=\Ker\phi_\A\subset R$ a toric ideal, and $G$ a finite subset of $P_\A$. 
Then $G$ is a Gr\"obner basis of $P_\A$ with respect to $\prec$ if and only if for any monomial $u\not\in \langle \initial_\prec(g)\mid g\in G\rangle$, 
\[
u=\min_\prec\{v\in \M \mid \phi_\A(u)=\phi_\A(v)\}. 
\]
\end{Lemma}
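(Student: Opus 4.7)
The plan is to reduce the lemma to the following auxiliary characterization of the initial ideal: a monomial $u\in\M$ lies in $\initial_\prec(P_\A)$ if and only if there exists a monomial $v\prec u$ with $\phi_\A(u)=\phi_\A(v)$. Once this is established, both directions of the lemma fall out immediately.

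To prove the auxiliary claim, the ``if'' direction is trivial, since $u-v\in P_\A$ has initial term $u$. For the ``only if'' direction, I would invoke the fact recalled in the paper that $P_\A$ is generated by the binomials $u'-v'$ with $\phi_\A(u')=\phi_\A(v')$. Because $P_\A$ is a binomial ideal, its initial ideal is generated by the initial terms of such binomials, so any monomial $u\in\initial_\prec(P_\A)$ is divisible by some $\initial_\prec(u'-v')=u'$ with $v'\prec u'$. Writing $u=u'w$ and setting $v=v'w$, we obtain $v\prec u$ and $\phi_\A(v)=\phi_\A(v')\phi_\A(w)=\phi_\A(u')\phi_\A(w)=\phi_\A(u)$, as desired.

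For the forward direction of the lemma, assume $G$ is a Gr\"obner basis, so $\langle \initial_\prec(g)\mid g\in G\rangle=\initial_\prec(P_\A)$. If $u\not\in\langle \initial_\prec(g)\mid g\in G\rangle$, the auxiliary claim prevents any $v\prec u$ from satisfying $\phi_\A(v)=\phi_\A(u)$, so $u$ is itself the $\prec$-minimum of $\{v\in\M\mid \phi_\A(v)=\phi_\A(u)\}$. For the converse, assume the minimality property holds and pick any $u\in\initial_\prec(P_\A)$. The auxiliary claim supplies some $v\prec u$ with $\phi_\A(v)=\phi_\A(u)$, so $u$ fails to be the $\prec$-minimum; by the contrapositive of the hypothesis, $u\in\langle \initial_\prec(g)\mid g\in G\rangle$. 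This proves $\initial_\prec(P_\A)\subseteq \langle \initial_\prec(g)\mid g\in G\rangle$, and the reverse inclusion is immediate from $G\subseteq P_\A$.

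There is no real obstacle: the only non-formal ingredient is the auxiliary characterization of $\initial_\prec(P_\A)$, and that reduces to the binomial generation of toric ideals that was already cited earlier. The rest of the argument is bookkeeping with the definition of Gr\"obner basis.
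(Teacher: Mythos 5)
Your proposal is correct and follows essentially the same route as the paper: both hinge on the biconditional that a monomial $u$ is the $\prec$-minimum of its $\phi_\A$-fiber if and only if $u\notin\initial_\prec(P_\A)$, and then conclude by the monomial criterion for Gr\"obner bases. The paper states this biconditional tersely (giving only the easy direction ``$u-v\in P_\A$'' explicitly) and treats the other direction as standard; your proposal fills in that direction by appealing to the fact that toric ideals admit Gr\"obner bases of binomials $u'-v'$ with $\phi_\A(u')=\phi_\A(v')$. One small caution: the phrase ``because $P_\A$ is a binomial ideal, its initial ideal is generated by the initial terms of such binomials'' is not automatic for arbitrary binomial generating sets; what you need (and is true) is that the \emph{reduced} Gr\"obner basis of a toric (or more generally lattice) ideal consists of such pure difference binomials. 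With that justification made explicit, the argument is complete and matches the paper's.
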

\begin{proof}
Let $u$ be a monomial. 
For a monomial $v$ satisfying $\phi_\A(u)=\phi_\A(v)$, we have $u-v\in P_\A$. 
Therefore $u=\min_\prec\{v\in \M \mid \phi_\A(u)=\phi_\A(v)\}$ if and only if $u\not\in\initial_\prec(P_\A)$. 
Since $G$ is a Gr\"obner basis of $P_\A$ if and only if $u\not\in\initial_\prec(P_\A)$ 
for any monomial $u\not\in \langle \initial_\prec(g)\mid g\in G\rangle$, we conclude the assertion. 
\end{proof}
For a weight vector $\omega=(\omega_1,\dots,\omega_r)\in \N^r$, 
we can grade the ring $R$ by associating weights $\omega_i$ to ${x}_i$. 
To distinguish this grading from the standard one, 
we say that polynomials or ideals of $R$ are {\it $\omega$-homogeneous} if they are homogeneous with respect to the 
graded structure given by $\omega$. 
\begin{Definition}
Given a polynomial $f\in R$ and a weight vector $\omega$, the {\it initial form} $\initial_\omega(f)$ is the sum of all monomials of $f$ 
of the highest weight with respect to $\omega$. 
We call $\initial_\omega(I)=\langle \initial_\omega(f)\mid f\in I\rangle$ the {\it initial ideal} of $I$ with respect to $\omega$. 
If $\initial_\omega(I)$ is a monomial ideal, 
we call $G$ a Gr\"obner basis of $I$ with respect to $\omega$. 
\end{Definition}
We define a new term order constructed from $\omega$ and a term order. 
\begin{Definition}\label{w order}
For a weight vector $\omega$ and a term order $\prec$, we define a new term order $\prec_\omega$ 
constructed from $\omega$ with $\prec$ a tie-breaker 
as following; 
$\x^\a \prec_\omega \x^\b $ if $\omega\cdot\a <\omega\cdot\b $, 
or $\omega\cdot\a =\omega\cdot\b $ and $\x^\a \prec \x^\b $. 
\end{Definition}
We end this section with the following useful propositions about weighted order. See \cite{Sturmfels} for the proofs. 
\begin{Proposition}\label{w}
$\initial_\prec(\initial_\omega(I))=\initial_{\prec_\omega}(I)$. 
\end{Proposition}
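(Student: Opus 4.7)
The plan is to prove the proposition via the pointwise identity
\[
\initial_{\prec_\omega}(f) = \initial_\prec(\initial_\omega(f))
\]
valid for every nonzero $f \in R$, which is immediate from Definition \ref{w order}: $\prec_\omega$ selects the terms of maximum $\omega$-weight and then breaks ties using $\prec$, exactly as the composite operation on the right does. Granting this identity, I would verify the two inclusions of the claimed equality of monomial ideals in turn.

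The easy inclusion $\initial_{\prec_\omega}(I) \subseteq \initial_\prec(\initial_\omega(I))$ follows directly: every generator of the left-hand side has the form $\initial_{\prec_\omega}(f) = \initial_\prec(\initial_\omega(f))$ for some $f \in I$, and since $\initial_\omega(f) \in \initial_\omega(I)$, its $\prec$-leading monomial lies in $\initial_\prec(\initial_\omega(I))$. The reverse inclusion is the substantive direction. Because $\initial_\omega(I)$ is generated by $\omega$-homogeneous elements, it is $\omega$-graded, and therefore the monomial ideal $\initial_\prec(\initial_\omega(I))$ is generated by the monomials $\initial_\prec(h)$ as $h$ ranges over $\omega$-homogeneous elements of $\initial_\omega(I)$. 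It suffices to show each such generator lies in $\initial_{\prec_\omega}(I)$, and this is where the key \emph{lifting lemma} enters: for any $\omega$-homogeneous $h \in \initial_\omega(I)$ there exists $f \in I$ with $\initial_\omega(f) = h$. Given the lemma, $\initial_\prec(h) = \initial_\prec(\initial_\omega(f)) = \initial_{\prec_\omega}(f) \in \initial_{\prec_\omega}(I)$, which closes the argument.

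The main obstacle is precisely this lifting lemma. To prove it I would write $h = \sum_i q_i\, \initial_\omega(f_i)$ with $f_i \in I$ and $q_i \in R$, split each $q_i$ into its $\omega$-homogeneous components, and discard every summand whose $\omega$-weight does not complement that of $\initial_\omega(f_i)$ to the $\omega$-weight of $h$. After this pruning every $q_i$ is $\omega$-homogeneous and each product $q_i\, \initial_\omega(f_i) = \initial_\omega(q_i f_i)$ has $\omega$-weight equal to that of $h$. Setting $f = \sum q_i f_i \in I$, every term of $f$ has $\omega$-weight at most this common value, and its top-weight component is precisely $\sum q_i \initial_\omega(f_i) = h \neq 0$, so $\initial_\omega(f) = h$ as required. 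The remaining bookkeeping is formal; the only subtle point is tracking how the weights of the $q_i$ and the products $q_i f_i$ match up against the fixed target weight of $h$.
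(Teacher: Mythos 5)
Your proof is correct. The paper does not actually prove Proposition \ref{w} --- it refers the reader to \cite{Sturmfels} --- so there is no internal argument to compare against, but your route is sound and self-contained. The pointwise identity $\initial_{\prec_\omega}(f)=\initial_\prec(\initial_\omega(f))$ is indeed immediate from Definition \ref{w order}, the easy inclusion follows, and the substantive content is correctly isolated in the lifting lemma, which you prove by $\omega$-homogenizing a representation $h=\sum_i q_i\,\initial_\omega(f_i)$ and noting that $f=\sum_i q_i f_i\in I$ has top $\omega$-weight component exactly $h\neq 0$. The reduction to $\omega$-homogeneous generators $h$ is also fine, though it merits a sentence: the $\prec$-leading monomial of any $g\in\initial_\omega(I)$ is the $\prec$-leading monomial of one of the $\omega$-homogeneous components of $g$, each of which again lies in $\initial_\omega(I)$ since that ideal is $\omega$-graded. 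It is worth noting that your lifting argument is essentially the same device the paper deploys in its proof of Lemma \ref{pull back of initial}, where $\phi_d(g)\in\initial_\omega(I)$ is lifted to $q=\sum h_i f_i\in I$ with $\initial_\omega(q)=\phi_d(g)$. An alternative and slightly slicker route, available when $I$ is homogeneous (the only case the paper uses), avoids the lifting lemma entirely: $\initial_\prec(\initial_\omega(I))$, $\initial_{\prec_\omega}(I)$, and $I$ all share the same Hilbert function, so the easy inclusion alone forces equality of the two monomial ideals. Your direct argument has the advantage of requiring no homogeneity hypothesis on $I$.
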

\begin{Proposition}
For any term order $\prec$ and any ideal $I\subset R$, 
there exists a weight vector $\omega\in\N^r$ such that $\initial_\prec(I)= \initial_\omega(I)$. 
\end{Proposition}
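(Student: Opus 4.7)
The plan is to extract $\omega$ from a finite Gr\"obner basis of $I$ with respect to $\prec$ and then use Proposition \ref{w} (plus a Hilbert-function comparison) to upgrade ``$\omega$ preserves leading forms of a generating set'' to ``$\omega$ realises the full initial ideal.'' Concretely, pick a finite (say reduced) Gr\"obner basis $G=\{g_1,\dots,g_k\}$ of $I$ with respect to $\prec$ and write $\initial_\prec(g_i)=\x^{\alpha_i}$. The goal is to find $\omega\in\N^r$ such that $\initial_\omega(g_i)=\x^{\alpha_i}$ for every $i$, and then deduce $\initial_\omega(I)=\initial_\prec(I)$.

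For the first part, the condition $\initial_\omega(g_i)=\x^{\alpha_i}$ is equivalent to the finite system of strict linear inequalities $\omega\cdot(\alpha_i-\beta)>0$, one for each exponent $\beta\ne\alpha_i$ appearing in $g_i$. Since $\prec$ is a term order, $\alpha_i\succ\beta$ for each such pair, so this finite system is consistent in the $\prec$-sense; by Robbiano's theorem representing $\prec$ by a nonnegative rational matrix (or by a direct Farkas-type argument applied to the finitely many difference vectors), the associated open cone of $\omega\in\R^r_{\ge 0}$ is non-empty. Clearing denominators gives $\omega\in\N^r$.

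For the second part, observe that with this choice of $\omega$ we have $\initial_{\prec_\omega}(g_i)=\x^{\alpha_i}$ as well, since the maximum-weight exponent in $g_i$ is unique. In particular, $\initial_\prec(I)=\langle\x^{\alpha_i}\rangle\subseteq\initial_{\prec_\omega}(I)$, and by Proposition \ref{w} the right-hand side equals $\initial_\prec(\initial_\omega(I))$. Also $\initial_\prec(I)=\langle\initial_\omega(g_i)\rangle\subseteq\initial_\omega(I)$. The degeneration $t\rightsquigarrow\initial_\omega(I)$ is a flat $K[t]$-family (the Rees-type construction $t^{\omega\cdot\a}\x^\a$), so $\initial_\omega(I)$ has the same (affine) Hilbert function as $I$, hence the same as $\initial_\prec(I)$; taking $\initial_\prec$ preserves this Hilbert function as well. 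Combining the containment $\initial_\prec(I)\subseteq\initial_\omega(I)$ with equality of Hilbert functions forces $\initial_\omega(I)=\initial_\prec(I)$, which is the desired conclusion.

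The main obstacle is the existence assertion in the middle step: producing a single integer vector $\omega$ that strictly realises all the term-order inequalities arising from the finite Gr\"obner basis. This is not entirely trivial because a term order need not itself be given by a weight vector (e.g.\ the lexicographic order), and the subtlety is that one is only required to respect \emph{finitely many} inequalities at once, so the infinite-dimensional obstruction to representing $\prec$ globally does not arise. The Hilbert-function step, by contrast, is routine once one has Proposition \ref{w} in hand, provided one is careful to use the affine Hilbert function if $I$ is not standard-graded.
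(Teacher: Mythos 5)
The paper gives no proof of this proposition (it defers to Sturmfels), so I compare your argument against the standard one. Your overall route -- extract $\omega$ from the finitely many inequalities coming from a reduced Gr\"obner basis and then upgrade to equality of initial ideals -- is the standard approach, and the first step (Farkas/Robbiano, that finitely many $\prec$-inequalities can be realized simultaneously by a single $\omega\in\N^r$) is fine.

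The gap is in the Hilbert-function step, and switching to the \emph{affine} Hilbert function does not repair it, contrary to what you assert at the end. The claim that $R/I$ and $R/\initial_\prec(I)$ have the same affine Hilbert function is true only when $\prec$ refines total degree (or when $I$ is homogeneous, since then a homogeneous reduced Gr\"obner basis keeps the division algorithm degree-preserving). For a non-homogeneous $I$ and a non-graded $\prec$ it can fail: take $R=K[x,y]$, $I=\langle x-y^2\rangle$, $\prec$ the lexicographic order with $x\succ y$. Then $\initial_\prec(I)=\langle x\rangle$ and $\dim_K (R/\initial_\prec(I))_{\le t}=t+1$, while the image of $R_{\le t}$ in $R/I\cong K[y]$ (via $x\mapsto y^2$) is $K[y]_{\le 2t}$, of dimension $2t+1$. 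So ``taking $\initial_\prec$ preserves the affine Hilbert function'' is false exactly in the case your caveat was supposed to cover, and the proposition is stated for an arbitrary ideal $I$.

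The fix is to replace the Hilbert-function comparison by Macaulay's basis theorem, which holds for \emph{every} term order: the monomials outside $\initial_\prec(I)$ form a $K$-basis of $R/I$. From $\initial_{\prec_\omega}(g_i)=\initial_\omega(g_i)=\initial_\prec(g_i)$ you get $\initial_\prec(I)\subseteq\initial_{\prec_\omega}(I)$, so the standard monomials of $\prec_\omega$ are a subset of those of $\prec$; since a basis cannot be a proper subset of another basis of the same space, $\initial_{\prec_\omega}(I)=\initial_\prec(I)$. By Proposition \ref{w} this gives $\initial_\prec(\initial_\omega(I))=\initial_\prec(I)\subseteq\initial_\omega(I)$. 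Finally, for any $f\in\initial_\omega(I)$, $\initial_\prec(f)\in\initial_\prec(\initial_\omega(I))=\initial_\prec(I)\subseteq\initial_\omega(I)$, so $f-\mathrm{lc}(f)\initial_\prec(f)\in\initial_\omega(I)$; iterating (term orders are well-orders) shows every term of $f$ lies in $\initial_\prec(I)$, giving the reverse inclusion $\initial_\omega(I)\subseteq\initial_\prec(I)$. This gives the result with no grading hypothesis, whereas your Hilbert-function argument is only valid in the homogeneous (or degree-compatible) case.
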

\section{Proof of the main theorem}
\subsection{Quadratic Gr\"obner bases of $\Ker{\phi_d}$}
Let $S=K[{y}_1,\dots,{y}_s]$ be a standard graded polynomial rings over a field $K$, 
$R^{[d]}=K[{x}_\a\mid \a\in \N_d^s]$ a polynomial ring whose variables 
correspond to the monomials of degree $d$ in $S$, and $\phi_d:R^{[d]}\to S$ the ring homomorphism $\phi_d({x}_\a)=\y^\a$. 
We denote by $\M$ the set of all monomials of $R^{[d]}$. 
In this section, we prove that $\Ker{\phi_d}$ has a quadratic Gr\"obner basis with respect to a certain reverse lexicographic order. 
\begin{Definition}
Let $\prec$ be a term order on $R^{[d]}$. 
For a monomial $u\in R^{[d]}$, we define 
\[
\mv_\prec(u)=\min_\prec\{{x}_\a\in R^{[d]}\mid \y^\a \mbox{~~divides~~} \phi_d(u)\}. 
\]
\end{Definition}
\begin{Lemma}\label{mv divides min}
Let $\prec$ a reverse lexicographic order on $R^{[d]}$, and $u\in R^{[d]}$ a monomial. Then 
\[
\min_\prec\{v\in\M \mid \phi_d(u)=\phi_d(v)\}
=\mv_\prec(u)\cdot \min_\prec\{v'\in \M \mid \frac{\phi_d(u)}{\phi_d(\mv_\prec(u))}=\phi_d(v')\}. 
\]
\end{Lemma}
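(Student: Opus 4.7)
The plan is to first prove the divisibility $\mv_\prec(u) \mid v_{\min}$, where $v_{\min} := \min_\prec\{v \in \M \mid \phi_d(v) = \phi_d(u)\}$, and then derive the stated factorization from it.

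For the divisibility, the structural fact I would use is that among monomials of a fixed total degree, the $\prec$-minimum greedily maximizes the exponent of the $\prec$-smallest variable, then that of the next smallest, and so on; this is a direct consequence of the definition of reverse lexicographic order. Every element of the $\phi_d$-class of $u$ has the same $R^{[d]}$-degree $k = |\phi_d(u)|/d$, so the rlex minimum over the class is governed by this greedy rule. I would then observe that a variable $x_\a$ appears in some representative of the class if and only if $\y^\a \mid \phi_d(u)$: the ``only if'' is immediate on applying $\phi_d$, and the ``if'' follows by arbitrarily writing the monomial $\phi_d(u)/\y^\a$ (of $\y$-degree $d(k-1)$) as a product of $k-1$ blocks of degree $d$. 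By the very definition of $\mv_\prec(u) = x_{\a_0}$, every variable strictly $\prec$-smaller than $x_{\a_0}$ fails the divisibility $\y^\a \mid \phi_d(u)$, hence has exponent $0$ in every representative; the greedy maximization therefore begins effectively at $x_{\a_0}$. Since $\y^{\a_0} \mid \phi_d(u)$, some representative uses $x_{\a_0}$, and the greedy rule forces $v_{\min}$ to have positive exponent on $x_{\a_0}$, giving $\mv_\prec(u) \mid v_{\min}$.

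Given this divisibility, I would finish as follows: writing $v_{\min} = \mv_\prec(u) \cdot w$, we have $\phi_d(w) = \phi_d(u)/\phi_d(\mv_\prec(u))$, so $w$ lies in the inner equivalence class. If some $w' \prec w$ satisfied $\phi_d(w') = \phi_d(w)$, then $\mv_\prec(u) \cdot w' \prec v_{\min}$ (since $\prec$ respects multiplication) would still satisfy $\phi_d(\mv_\prec(u) \cdot w') = \phi_d(u)$, contradicting the minimality of $v_{\min}$. Hence $w$ equals the inner minimum and the lemma follows. I expect the only real obstacle to be a clean treatment of the case in which $\mv_\prec(u)$ is not the globally $\prec$-smallest variable of $R^{[d]}$: one must argue that variables below $x_{\a_0}$ are absent from every representative (not merely from the minimal one), so that the greedy procedure's tie at exponent $0$ on those variables is automatic and the first effective step of the greedy rule truly lands on $x_{\a_0}$.
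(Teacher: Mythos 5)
Your proposal is correct and follows essentially the same route as the paper: establish that $\mv_\prec(u)$ divides $v_{\min}$ by exhibiting a representative $\mv_\prec(u)\cdot v'$ of the $\phi_d$-fiber and using the reverse-lexicographic property (your ``greedy'' description is just an expanded phrasing of the paper's one-line appeal to rlex, after noting that no variable strictly smaller than $\mv_\prec(u)$ can occur in any representative), and then deduce the factorization by multiplicativity of the term order. The one point worth keeping explicit, which you already flag, is that $\phi_d(u)/\phi_d(\mv_\prec(u))$ has degree a multiple of $d$, so a preimage $v'$ under $\phi_d$ indeed exists.
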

\begin{proof}
Since $\phi_d(u)/\phi_d(\mv_\prec(u))$ is a monomial whose degree is divisible by $d$, 
there exists a monomial $v'\in\M$ such that $\phi_d(v')=\phi_d(u)/\phi_d(\mv_\prec(u))$. 
Let $u_0=\min_\prec\{v\in\M \mid \phi_d(u)=\phi_d(v)\}$. 
Since $\prec$ is a reverse lexicographic order and $u_0\prec \mv_\prec(u)\cdot v'$, $u_0$ is divided by $\mv_\prec(u)$. 
Hence the assertion follows. 
\end{proof}
We will give a criterion for a finite subset of $\Ker{\phi_d}$ to be a Gr\"obner basis of $\Ker{\phi_d}$ 
with respect to a reverse lexicographic order. 
\begin{Proposition}\label{rev lex criterion}
Let $\prec$ be a reverse lexicographic order on $R^{[d]}$, and $G$ a finite subset of $\Ker{\phi_d}$. 
Then $G$ is a Gr\"obner basis of $\Ker{\phi_d}$ with respect to $\prec$ 
if and only if $\mv_\prec(u)$ divides $u$ for any monomial $u\not\in \langle \initial_\prec(g)\mid g\in G\rangle$. 
\end{Proposition}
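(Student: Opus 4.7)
The plan is to reduce everything to the two previous lemmas. Lemma~\ref{toric GB} already characterizes when $G$ is a Gr\"obner basis of $\Ker\phi_d$: namely, every monomial $u\notin\langle\initial_\prec(g)\mid g\in G\rangle$ must coincide with the $\prec$-minimum of its fiber $\{v\in\M\mid \phi_d(v)=\phi_d(u)\}$. What I really need, therefore, is to show that under a reverse lexicographic $\prec$, this fiber-minimality is equivalent to the divisibility condition $\mv_\prec(u)\mid u$.

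For the only-if direction, suppose $G$ is a Gr\"obner basis and $u$ lies outside the initial ideal. Then $u=\min_\prec\{v\in\M\mid \phi_d(v)=\phi_d(u)\}$, and Lemma~\ref{mv divides min} asserts that this minimum is always divisible by $\mv_\prec(u)$. Hence $\mv_\prec(u)\mid u$.

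For the if direction I would induct on the total degree of $u$, the degree-zero case being vacuous. In the inductive step, write $u=\mv_\prec(u)\cdot u'$. Since $\langle\initial_\prec(g)\mid g\in G\rangle$ is a monomial ideal, its complement is closed under taking divisors, so $u'$ is again outside the initial ideal and has strictly smaller degree; the induction hypothesis combined with Lemma~\ref{toric GB} gives $u'=\min_\prec\{v'\in\M\mid \phi_d(v')=\phi_d(u')\}$. Since $\phi_d(u')=\phi_d(u)/\phi_d(\mv_\prec(u))$, Lemma~\ref{mv divides min} then yields
\[
\min_\prec\{v\in\M\mid\phi_d(v)=\phi_d(u)\}=\mv_\prec(u)\cdot u'=u,
\]
and another application of Lemma~\ref{toric GB} concludes that $G$ is a Gr\"obner basis.

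The proof is essentially a repackaging of Lemmas~\ref{toric GB} and~\ref{mv divides min}; the one small point worth checking is that the divisor $u'$ of $u$ remains outside the initial ideal so the induction may proceed, but this is automatic because initial ideals are monomial. The genuine content of the proposition, including the use of the reverse lexicographic hypothesis, has already been absorbed into Lemma~\ref{mv divides min}, so I do not anticipate a serious obstacle.
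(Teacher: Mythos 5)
Your proposal is correct and follows essentially the same approach as the paper: the only-if direction immediately combines Lemma \ref{toric GB} with Lemma \ref{mv divides min}, and the if direction proves $u=\min_\prec\{v\in\M\mid\phi_d(v)=\phi_d(u)\}$ by induction on $\deg u$, dividing off $\mv_\prec(u)$ and invoking Lemma \ref{mv divides min}, then concludes via Lemma \ref{toric GB}. The only small slip is that the degree-zero base case is trivially true rather than vacuous (the fiber of $1$ is $\{1\}$, so the equality holds), and invoking Lemma \ref{toric GB} inside the inductive step is redundant since the induction hypothesis already states the min-of-fiber property directly; neither affects correctness.
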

\begin{proof}
Suppose that $G$ is a Gr\"obner basis, and let $u\not\in \langle \initial_\prec(g)\mid g\in G\rangle=\initial_\prec(I)$ 
be a monomial. 
Then $\mv_\prec(u)$ divides $u$ by Lemma \ref{toric GB} and Lemma \ref{mv divides min}. 

Conversely, suppose that $\mv_\prec(u)$ divides $u$ for any monomial $u\not\in \langle \initial_\prec(g)\mid g\in G\rangle$. 
We will prove $u=\min_\prec\{v\in \M \mid \phi_d(u)=\phi_d(v)\}$ by induction on the degree of $u$. 
Since $u/\mv_\prec(u)$ is also not in $\langle \initial_\prec(g)\mid g\in G\rangle$, it holds that 
\[
u/\mv_\prec(u)=\min_\prec\{v\in \M \mid \phi_d(u/\mv_\prec(u))=\phi_d(v)\} 
\]
by the assumption of induction. 
By Lemma \ref{mv divides min}, we conclude 
\begin{eqnarray*}
u&=&\mv_\prec(u)\cdot (u/\mv_\prec(u))=\mv_\prec(u)\cdot \min_\prec\{v\in \M \mid \phi_d(u/\mv_\prec(u))=\phi_d(v)\}\\
 &=&\min_\prec\{v\in \M \mid \phi_d(u)=\phi_d(v)\}. 
\end{eqnarray*}
Hence $G$ is a Gr\"obner basis of $I$ by Lemma \ref{toric GB}. 
\end{proof}
\begin{Definition}
Let $\a={}^{}(a_1,\dots,a_s)\in \N^s$, and $\sigma \in\mathfrak{S}_s$ be a permutation of indices such that 
$a_{\sigma(1)}\le a_{\sigma(2)}\le \dots \le a_{\sigma(s)}$. We define $\Gamma(\a)\in \N^s$ to be 
\[
\Gamma(\a)=(a_{\sigma(1)}, a_{\sigma(2)}, \dots, a_{\sigma(s)}). 
\]
\end{Definition}
\begin{Definition}
Let $\prec_\Gamma$ be a reverse lexicographic order on $R^{[d]}$ such that the order on variables is defined as follows: 
${x}_\a\prec_\Gamma {x}_\b$ if $\Gamma(\b)\prec_{lex} \Gamma(\a)$ or $\Gamma(\b)= \Gamma(\a)$ and $\b\prec_{lex} \a$. 
\end{Definition}
\begin{Example}
In the case of $s=2$ and $d=4$, 
\[
x_{(2,2)}\prec_\Gamma x_{(3,1)}\prec_\Gamma x_{(1,3)}\prec_\Gamma x_{(4,0)}\prec_\Gamma x_{(0,4)}. 
\]
In the case of $s=3$ and $d=3$, 
\begin{eqnarray*}
&&x_{(1,1,1)}\prec_\Gamma x_{(2,1,0)}\prec_\Gamma x_{(2,0,1)}\prec_\Gamma x_{(1,2,0)}
\prec_\Gamma x_{(1,0,2)}\prec_\Gamma x_{(0,2,1)}\prec_\Gamma\\
&&x_{(0,1,2)}\prec_\Gamma x_{(3,0,0)}\prec_\Gamma x_{(0,3,0)}\prec_\Gamma x_{(0,0,3)}. 
\end{eqnarray*}
\end{Example}
The following are some typical and important properties of the term order $\prec_\Gamma$. 
\begin{Lemma}\label{remarks on order}
Let $\a={}^{}(a_1,\dots,a_s), \b={}^{}(b_1,\dots,b_s)\in \N_d^s$. 
\begin{enumerate}
\item $\Gamma(\a)$ is the minimal element of $\{(a_{\sigma(1)}, a_{\sigma(2)}, \dots, a_{\sigma(s)})\mid \sigma \in \mathfrak{S}_s\}$ with 
respect to the lexicographic order. 
\item If $\#\{i\mid a_i\neq 0\}>\#\{i\mid b_i\neq 0\}$ where $\# F$ is the cardinality of the set $F$, then $\a\prec_\Gamma \b$. 
\item Suppose that $a_j-a_i\ge 2$ for some $1\le i,j \le s$, and let $\a'=\a+\e_i-\e_j$. 
Then $x_{\a'}\prec_\Gamma x_{\a}$. 
\item Suppose that $a_j-a_i=1$ for some $1\le i,j \le s$, and let $\a'=\a+\e_i-\e_j$. 
Then $x_{\a'}\prec_\Gamma x_{\a}$ if and only if $i<j$. 
\item Let $u\in R^{[d]}$ be a monomial, and suppose that ${x}_\a=\mv_{\prec_\Gamma}(u)$. 
If $a_j-a_i\ge 2$ for some $1\le i,j \le s$, then the degree of $\phi_d(u)$ in the variable ${y}_i$ is $a_i$. 
\end{enumerate}
\end{Lemma}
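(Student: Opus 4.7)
The plan is to treat the five assertions in the order given, since (1)--(4) are purely combinatorial manipulations on $\Gamma$ and the definition of $\prec_\Gamma$, while (5) will follow from (3) by a short minimality argument for $\mv_{\prec_\Gamma}$. For (1) the standard uniqueness-of-sort argument applies: if a rearrangement of $\a$ differs from $\Gamma(\a)$, let $k$ be the first position of disagreement; at position $k$ the entry of $\Gamma(\a)$ is, by construction, the smallest element of the multiset of remaining coordinates, so it is strictly smaller than the competing entry, and $\Gamma(\a)\prec_{lex}$ any other rearrangement. Item (2) reduces to the same observation applied to leading zeros: sorting $\a$ and $\b$ in non-decreasing order places $s-\#\{i:a_i\ne 0\}$ and $s-\#\{i:b_i\ne 0\}$ zeros at the start of $\Gamma(\a)$ and $\Gamma(\b)$ respectively, and the hypothesis gives strictly more in the latter, so the first position of disagreement finds $0$ in $\Gamma(\b)$ against a positive entry in $\Gamma(\a)$; hence $\Gamma(\b)\prec_{lex}\Gamma(\a)$ and $\a\prec_\Gamma\b$.

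The substantive item is (3). The cleanest approach is to track the cumulative count $C_\c(t):=\#\{k:c_k\le t\}$. Under the replacement $(a_i,a_j)\mapsto(a_i+1,a_j-1)$ with $a_j-a_i\ge 2$, both changed entries start at values $\ge a_i$ and land at values strictly greater than $a_i$, so $C_\a(t)=C_{\a'}(t)$ for every $t<a_i$ while $C_{\a'}(a_i)=C_\a(a_i)-1$. Since the sorted sequence is recovered from $C$ by $\Gamma(\c)_p=\min\{t:C_\c(t)\ge p\}$, the sequences $\Gamma(\a)$ and $\Gamma(\a')$ therefore agree on positions $1,\dots,C_\a(a_i)-1$ and disagree at position $C_\a(a_i)$, where the former equals $a_i$ and the latter is strictly larger. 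Hence $\Gamma(\a)\prec_{lex}\Gamma(\a')$ and so $x_{\a'}\prec_\Gamma x_\a$.

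Item (4) is essentially a tautology once one notices that the condition $a_j=a_i+1$ makes $\a\mapsto\a'$ just a swap of the values at positions $i$ and $j$; then $\Gamma(\a)=\Gamma(\a')$ and the definition of $\prec_\Gamma$ reduces the question to $\a\prec_{lex}\a'$, which is decided at position $\min(i,j)$ and gives exactly the stated equivalence with $i<j$. For (5), assume $x_\a=\mv_{\prec_\Gamma}(u)$ and $a_j-a_i\ge 2$, and let $e_i$ denote the degree of $\phi_d(u)$ in $y_i$; the divisibility $\y^\a\mid\phi_d(u)$ automatically yields $e_i\ge a_i$, and if this were strict then $\a'':=\a+\e_i-\e_j\in\N_d^s$ would still satisfy $\y^{\a''}\mid\phi_d(u)$ (the $i$-th coordinate grows from $a_i$ to $a_i+1\le e_i$, the $j$-th drops, the others are unchanged), while by (3) one would have $x_{\a''}\prec_\Gamma x_\a$, contradicting the minimality defining $\mv_{\prec_\Gamma}(u)$. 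I expect (3) to be the main obstacle, as it is the only item whose verification requires more than a one-line check; once the cumulative-count bookkeeping is in place, everything else is either immediate or a direct appeal to it.
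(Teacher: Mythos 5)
Your proof is correct and, where the paper gives an argument (parts (4) and (5)), it coincides with the paper's: reduce (4) to a lexicographic comparison after noting $\Gamma(\a)=\Gamma(\a')$, and derive (5) by contradiction from (3) and the minimality defining $\mv_{\prec_\Gamma}$. For parts (1)--(3) the paper simply declares them immediate from the definitions, so your cumulative-count bookkeeping for (3) is a welcome fleshing-out rather than a different route.
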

\begin{proof}
The assertions of (1), (2), and (3) follow immediately from the definition of the term order $\prec_\Gamma$. 

(4) Note that $\Gamma(\a)=\Gamma(\a')$, and $\a'$ is the vector obtained from $\a$ by swapping the $i$-th and $j$-th components. 
Hence $x_{\a'}\prec_\Gamma x_{\a}$ if and only if $\a\prec_{lex} \a'$, which is equivalent to $i<j$. 
 
(5) Assume, to the contrary, that the degree of $\phi_d(u)$ in the variable ${y}_i$ is strictly greater than $a_i$. 
Then $\y^{\a'}$ divides $\phi_d(u)$ where $\a'=\a+\e_i-\e_j$. 
By (3), we have $x_{\a'}\prec_\Gamma x_{\a}$ which contradicts the definition of $\mv_{\prec_\Gamma}(u)$. 
\end{proof}
\begin{Theorem}\label{quad GB}
Let 
\[
G_\Gamma=\{{x}_{\a+\e_i}{x}_{\b+\e_j}-{x}_{\a+\e_j}{x}_{\b+\e_i}\mid \a,\b\in \N^s_{d-1},~1\le i<j\le s \}. 
\]
Then $G_\Gamma$ is a Gr\"obner basis of $\Ker{\phi_d}$ with respect to $\prec_\Gamma$. 
\end{Theorem}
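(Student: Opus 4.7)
The plan is to invoke Proposition~\ref{rev lex criterion}: since $\prec_\Gamma$ is a reverse lexicographic order, it suffices to verify that for every monomial $u\in R^{[d]}$ not divisible by any initial term $\initial_{\prec_\Gamma}(g)$ with $g\in G_\Gamma$, we have $\mv_{\prec_\Gamma}(u)\mid u$. Membership $G_\Gamma\subset\Ker\phi_d$ is immediate because the two monomials of each $g$ map to the common image $\y^{\a+\b+\e_i+\e_j}$ under $\phi_d$. So write $u=x_{\a^{(1)}}\cdots x_{\a^{(n)}}$ for such a ``standard'' monomial, put $\mathbf{t}=\sum_k\a^{(k)}$ so that $\phi_d(u)=\y^{\mathbf{t}}$, and let $x_\m=\mv_{\prec_\Gamma}(u)$, where $\m$ is the $\prec_\Gamma$-minimum of $\{\m'\in\N_d^s:\m'\le\mathbf{t}\}$. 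The goal is to prove $\m=\a^{(k_0)}$ for some $k_0$.

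My approach is first to extract structural constraints on $\m$ from its minimality. By Lemma~\ref{remarks on order}(3)--(4), whenever $m_j-m_i\ge 2$, or $m_j-m_i=1$ with $j>i$, the swap $\m\mapsto\m+\e_i-\e_j$ would produce a strictly $\prec_\Gamma$-smaller variable; so minimality of $x_\m$ forces that swap to be infeasible, which means $m_i=t_i$. These pinning relations (the quantitative form of Lemma~\ref{remarks on order}(5)) determine $\m$ rigidly in terms of $\mathbf{t}$: $\m$ equals $\mathbf{t}$ at its ``small'' coordinates and is free only at the ``large'' ones, with the flexibility bounded by $|\m|=d$.

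Assuming for contradiction that $\m\ne\a^{(k)}$ for every $k$, I would then produce indices $k_1\ne k_2$ and coordinates $i<j$ such that the binomial in $G_\Gamma$ corresponding to the pair $(\a^{(k_1)},\a^{(k_2)})$ and the swap $(\e_i,\e_j)$ has $x_{\a^{(k_1)}}x_{\a^{(k_2)}}$ as its $\prec_\Gamma$-initial term. This would give a factor of $u$ divisible by an initial term from $G_\Gamma$, contradicting standardness. The hard part is producing $(k_1,k_2,i,j)$ explicitly: the natural strategy is to decompose $\m=\sum_k\m^{(k)}$ with $0\le\m^{(k)}\le\a^{(k)}$ coordinate-wise, and then use the ``excess'' $\a^{(k)}-\m^{(k)}$ together with the pinning relations to locate a transport direction between two of the $\a^{(k)}$'s that matches the bias encoded in $\prec_\Gamma$, forcing the corresponding pair to be reducible.
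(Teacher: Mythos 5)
Your framing is right: you invoke Proposition~\ref{rev lex criterion}, note that $G_\Gamma\subset\Ker\phi_d$ is immediate, and translate the goal to showing that $\mv_{\prec_\Gamma}(u)$ divides $u$ when $u$ avoids the initial ideal generated by $G_\Gamma$; your ``pinning'' observation (if $m_j-m_i\ge 2$, or $m_j-m_i=1$ with $i<j$, then minimality of $\m$ forces $m_i=t_i$) is exactly the content of Lemma~\ref{remarks on order}~(3)--(5). But the proposal stops short of the actual proof: you explicitly flag ``producing $(k_1,k_2,i,j)$'' as the hard part and only describe ``the natural strategy'' (decompose $\m=\sum_k\m^{(k)}$ with $0\le\m^{(k)}\le\a^{(k)}$, use excesses to locate a transport direction). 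That is a plan, not an argument, and it is precisely where all the difficulty lives.

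Moreover, the decomposition you propose is not the ingredient the paper uses, and it is unclear how it would close the argument. The paper instead compares two \emph{specific} vectors: $\a=\mv_{\prec_\Gamma}(u)$ and $\b$, the $\prec_\Gamma$-smallest variable actually dividing $u$. Assuming $x_\a\nmid u$, one has $\a\ne\b$ and $x_\a\prec_\Gamma x_\b$; sorting both by the permutation $\tau$ that orders $\b$ increasingly, the inequality $\Gamma(\b)\preceq_{lex}\Gamma(\a)\preceq_{lex}(a_{\tau(1)},\dots,a_{\tau(s)})$ yields a first index $j$ with $b_{\tau(j)}<a_{\tau(j)}$, and since both sum to $d$, a later index $k>j$ with $b_{\tau(k)}>a_{\tau(k)}$. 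Lemma~\ref{remarks on order}~(5) (your pinning relation) bounds $a_{\tau(j)}-a_{\tau(k)}\le 1$, which together with the sign information forces $b_{\tau(k)}>b_{\tau(j)}$; one then produces a third variable $x_\c$ dividing $u/x_\b$ with $c_{\tau(j)}>0$, sets $\b'=\b+\e_{\tau(j)}-\e_{\tau(k)}$, $\c'=\c-\e_{\tau(j)}+\e_{\tau(k)}$, and checks, via a case split on $b_{\tau(k)}-b_{\tau(j)}\in\{1,\ge 2\}$ using Lemma~\ref{remarks on order}~(3)--(4) (and, in the border case, the minimality of $\a=\mv_{\prec_\Gamma}(u)$ once more), that $x_\b x_\c$ is the leading term of $x_\b x_\c-x_{\b'}x_{\c'}\in G_\Gamma$. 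That second use of the minimality of $\a$ in the equality case is easy to miss and is essential. Your sketch neither identifies $\b$ as the minimal divisor nor performs the sorted comparison nor handles the $b_{\tau(k)}-b_{\tau(j)}=1$ case, so as written the proof has a genuine gap.
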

\begin{proof}
As $(\a+\e_i)+(\b+\e_j)=(\a+\e_j)+(\b+\e_i)$ for $\a,~\b\in\N_{d-1}^s$, $G_\Gamma$ is a finite subset of $\Ker{\phi_d}$. 
Let $u\in R^{[d]}$ be a monomial such that ${x}_\a=\mv_{\prec_\Gamma}(u)$ does not divide $u$.  
To conclude the assertion, it is enough to show that $u\in \langle \initial_{\prec_\Gamma}(g)\mid g\in G_\Gamma\rangle$ by Proposition \ref{rev lex criterion}. 
Take $\a={}^{}(a_1,\dots,a_s),~\b={}^{}(b_1,\dots,b_s)\in \N_d^s$ such that 
\begin{eqnarray*}
{x}_\a&=&\mv_{\prec_\Gamma}(u), \\
{x}_\b&=&\min_{\prec_\Gamma}\{{x}_\c\in R^{[d]}\mid {x}_\c \mbox{~~divides~~} u\}. 
\end{eqnarray*}
Note that $\a\neq \b$ and ${x}_\a\prec_\Gamma {x}_\b$. 
Let $\tau\in \mathfrak{S}_s$ be a permutation of indices 
such that $b_{\tau(1)}\le \dots \le b_{\tau(s)}$. 
Since \[
(b_{\tau(1)},b_{\tau(2)},\dots,b_{\tau(s)})=\Gamma(\b)\preceq_{lex}\Gamma(\a)\preceq_{lex}(a_{\tau(1)},a_{\tau(2)},\dots,a_{\tau(s)}) 
\]
by Lemma \ref{remarks on order} (1), 
and $(b_{\tau(1)},\dots,b_{\tau(s)})\neq (a_{\tau(1)},\dots,a_{\tau(s)})$, 
there exists $1\le {j}\le s$ such that $a_{\tau(i)}=b_{\tau(i)}$ for all $i<{j}$ and 
\[
b_{\tau({j})}<a_{\tau({j})}. 
\]
As $\abs{\a}=\abs{\b}=d$, there exists ${k}>{j}$ such that 
\[
b_{\tau({k})}>a_{\tau({k})}. 
\]
As $\y^\b$ divides $\phi_d(u)$ and $b_{\tau({k})}>a_{\tau({k})}$, 
we have 
\[
a_{\tau(j)}-a_{\tau(k)}\le 1 
\]
by Lemma \ref{remarks on order} (5). 
As 
\[
a_{\tau(j)}-a_{\tau(k)}=(a_{\tau({j})}-b_{\tau({j})})+(b_{\tau({j})}-b_{\tau(k)})+(b_{\tau(k)}-a_{\tau({k})}), 
\]
and $a_{\tau({j})}-b_{\tau({j})},~b_{\tau(k)}-a_{\tau({k})}>0$, we have 
\[
b_{\tau(k)}-b_{\tau(j)}>0. 
\]
Since $\y^\a$ divides $\phi_d(u)$, the degree of $\phi_d(u/{x}_\b)$ in the variable ${y}_{\tau(j)}$ is not less than $a_{\tau({j})}-b_{\tau({j})}>0$, 
and thus there exists $\c={}^{}(c_1,\dots,c_s)\in \N_d^s$ such that $c_{\tau({j})}>0$ and ${x}_\c$ divides $u/{x}_\b$. 
We set 
\begin{eqnarray*}
\b'&=&\b+\e_{\tau({j})}-\e_{\tau({k})}, \\
\c'&=&\c-\e_{\tau({j})}+\e_{\tau({k})}. 
\end{eqnarray*}
Then ${x}_\b {x}_\c-{x}_{\b'}{x}_{\c'}\in G_\Gamma$ and ${x}_\b {x}_\c$ divides $u$. 
To complete the proof, we will show that ${x}_\b {x}_\c$ is the initial term of ${x}_\b {x}_\c-{x}_{\b'}{x}_{\c'}$. 
Since ${x}_\b\prec_\Gamma {x}_\c$ and $\prec_\Gamma$ is a reverse lexicographic order, it is enough to show that ${x}_{\b'}\prec_\Gamma {x}_\b$. 
In the case of $b_{\tau({k})}-b_{\tau({j})}\ge 2$, we have ${x}_{\b'}\prec_\Gamma {x}_\b$ by Lemma \ref{remarks on order} (3). 
In the case of $b_{\tau({k})}-b_{\tau({j})}=1$, 
we have 
\[
a_{\tau({j})}-a_{\tau({k})}=(a_{\tau({j})}-b_{\tau({j})})-1+(b_{\tau(k)}-a_{\tau({k})})\ge 1, 
\]
and hence $a_{\tau({j})}-a_{\tau({k})}=1$. 
Let $\a'=\a-\e_{\tau({j})}+\e_{\tau({k})}$. 
Then $\y^{\a'}$ divides $\phi_d(u)$ as $b_{\tau({k})}>a_{\tau({k})}$. Thus ${x}_\a\prec_\Gamma {x}_{\a'}$ 
by the definition of $\mv_{\prec_\Gamma}(u)$. 
This implies ${\tau({j})}<{\tau({k})}$ by Lemma \ref{remarks on order} (4). 
Therefore ${x}_{\b'} \prec_\Gamma {x}_\b$ again by Lemma \ref{remarks on order} (4). 
\end{proof}
See \cite{BGT}, \cite{ERT} and \cite{De Negri} for other term orders that give quadratic Gr\"obner bases of $\Ker\phi_d$. 

We already proved Theorem \ref{main intro} in the case of $I=0$. 
In the rest of this paper, we prove that there exists a term order on $R^{[d]}$ such that 
the initial ideal of $\phi_d^{-1}(I)$ is generated by quadratic monomials for all sufficiently large $d$ 
for any homogeneous ideal $I\subset S$. 
First, we will prove this in the case where $I$ is a monomial ideal, and then reduce the general case to the monomial ideal case. 
\subsection{In the case of monomial ideals}
\begin{Definition}\label{def of L(I)}
Let $I\subset S$ be a monomial ideal, and $\prec$ any term order on $R^{[d]}$. 
We define 
\[
L_{\prec}(I)=\langle \M \cap (\phi_d^{-1}(I)\backslash \initial_\prec(\Ker{\phi_d})) \rangle
\]
to be the monomial ideal of $R^{[d]}$ generated by all monomials in $\phi_d^{-1}(I)\backslash \initial_\prec(\Ker{\phi_d})$, 
and $M_{\prec}(I)$ to be the minimal system of generators of $L_{\prec}(I)$ consisting of monomials. 
\end{Definition}
\begin{Lemma}\label{monomial cases}
Let $I\subset S$ be a monomial ideal, and $\prec$ any term order on $R^{[d]}$. 
Let $G$ be a Gr\"obner basis of $\Ker{\phi_d}$ with respect to $\prec$. 
Then $G\cup M_{\prec}(I)$ is a Gr\"obner basis of $\phi_d^{-1}(I)$ with respect to $\prec$. 
\end{Lemma}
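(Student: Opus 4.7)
The plan is to prove the equality
\[
\initial_\prec(\phi_d^{-1}(I)) \;=\; \langle \initial_\prec(g)\mid g\in G\rangle + \langle M_\prec(I)\rangle,
\]
from which the Gr\"obner basis claim follows at once since $G\cup M_\prec(I)\subset \phi_d^{-1}(I)$. The inclusion ``$\supseteq$'' is immediate: $G\subset \Ker\phi_d\subset \phi_d^{-1}(I)$, and every element of $M_\prec(I)$ is itself a monomial lying in $\phi_d^{-1}(I)$.

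For the nontrivial inclusion ``$\subseteq$'', I would take any nonzero $f\in \phi_d^{-1}(I)$ and set $u=\initial_\prec(f)$. If $u\in \initial_\prec(\Ker\phi_d)$, then by hypothesis $u\in \langle \initial_\prec(g)\mid g\in G\rangle$ and there is nothing to do. Otherwise assume $u\notin\initial_\prec(\Ker\phi_d)$. Apply the division algorithm to $f$ with respect to $G$ to produce $\bar f\in R^{[d]}$ with $f-\bar f\in \Ker\phi_d$ and such that no monomial appearing in $\bar f$ lies in $\initial_\prec(\Ker\phi_d)$. Since $u$ itself is not in $\initial_\prec(\Ker\phi_d)$, the leading term $u$ survives the reduction, so $\initial_\prec(\bar f)=u$, and moreover $\phi_d(\bar f)=\phi_d(f)\in I$.

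The key observation is the standard fact that the monomials of $R^{[d]}$ lying outside $\initial_\prec(\Ker\phi_d)$ form a $K$-basis of $R^{[d]}/\Ker\phi_d$, and $\phi_d$ embeds this quotient into $S$. Consequently, distinct such ``standard'' monomials are sent by $\phi_d$ to distinct monomials of $S$, so no cancellation occurs when expanding $\phi_d(\bar f)$: its monomial support is exactly $\{\phi_d(v)\mid v\text{ a monomial of }\bar f\}$. Because $I$ is a monomial ideal and $\phi_d(\bar f)\in I$, every such $\phi_d(v)$ lies in $I$, hence $v\in \phi_d^{-1}(I)$; combined with $v\notin \initial_\prec(\Ker\phi_d)$, this gives $v\in L_\prec(I)=\langle M_\prec(I)\rangle$. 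Taking $v=u$ yields the desired conclusion.

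The only delicate step is the no-cancellation argument in the third paragraph, and it is precisely where the hypothesis that $I$ is a monomial ideal is essential: it converts ``$\phi_d(\bar f)\in I$'' into the termwise statement ``$\phi_d(v)\in I$ for every monomial $v$ of $\bar f$.'' This is the one genuine obstacle, and presumably the reason the general case is handled separately by first deforming to a monomial initial ideal via an appropriate weight $\omega$.
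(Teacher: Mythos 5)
Your proposal is correct and follows essentially the same line of reasoning as the paper's proof: reduce modulo $G$ to obtain a remainder supported on standard monomials, use injectivity of $\phi_d$ on standard monomials to rule out cancellation, and invoke the monomial-ideal hypothesis to conclude termwise that the remainder lies in $L_\prec(I)$. The only cosmetic difference is that the paper phrases the conclusion as ``the remainder divides to zero by $M_\prec(I)$'' whereas you extract the leading-term statement directly; these are equivalent, and your spelled-out explanation of the no-cancellation step (standard monomials form a $K$-basis of $R^{[d]}/\Ker\phi_d$, embedded in $S$) is a useful expansion of what the paper leaves implicit.
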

\begin{proof}
First, we note that $G\cup M_{\prec}(I) \subset \phi_d^{-1}(I)$. 
Take $f\in \phi_d^{-1}(I)$, and let $g$ be the remainder on division of $f$ by $G$. 
Then any term of $g$ is not in $\initial_\prec(\Ker{\phi_d})$. 
Hence different monomials appearing in $g$ map to different monomials under $\phi_d$. 
Since $I$ is a monomial ideal, it follows that all terms of $g$ are in $L_{\prec}(I)$. 
Thus the remainder on division of $g$ by $M_{\prec}(I)$ is zero. 
Therefore a remainder on division of $f$ by $G\cup M_{\prec}(I)$ is zero. 
This implies that $G\cup M_{\prec}(I)$ is a Gr\"obner basis of $\phi_d^{-1}(I)$. 
\end{proof}
\begin{Proposition}\label{mv in}
Let $\a={}^{}(a_1,\dots,a_s)\in \N^s$, and $a=\max\{a_i\mid i=1,\dots,s\}$. 
Assume that $d\ge s(a+1)/2$. 
Let $u\in (\phi_d^{-1}(I)\backslash \initial_\prec(\Ker{\phi_d})$ be a monomial of degree $\ge 2$, 
and set ${x}_\b=\mv_{\prec_\Gamma}(u)$, $\b=(b_1,\dots,b_s)\in \N_d^s$ $x_\c=\mv_{\prec_\Gamma}(u/x_\b)$, $\c=(c_1,\dots,c_s)\in \N_d^s$. 
Then $x_\b x_\c\in \phi_d^{-1}(\y^\a)$. 
In particular, $L_{\prec_\Gamma}(\y^\a)$ is generated by quadratic monomials. 
\end{Proposition}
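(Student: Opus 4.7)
The plan is to exhibit, for any monomial $u \in L_{\prec_\Gamma}(\y^\a)$ of degree $\ge 2$, a quadratic monomial of $L_{\prec_\Gamma}(\y^\a)$ dividing $u$; the natural candidate is $x_\b x_\c$. Since $u \notin \initial_{\prec_\Gamma}(\Ker \phi_d)$, Proposition \ref{rev lex criterion} combined with Theorem \ref{quad GB} forces $x_\b \mid u$, and the same fact applied to $u/x_\b$ gives $x_\c \mid u/x_\b$; hence $x_\b x_\c$ divides $u$ and lies outside the monomial ideal $\initial_{\prec_\Gamma}(\Ker \phi_d)$. The crux is therefore to verify $\y^\a \mid \y^{\b+\c}$, that is, $a_i \le b_i + c_i$ for every $i$; once this is in hand, the statement that $L_{\prec_\Gamma}(\y^\a)$ is generated in degree at most $2$ follows at once.

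I would argue by contradiction: suppose $a_i > b_i + c_i$ for some $i$, and set $N_i = \deg_{y_i} \phi_d(u)$. Since $\y^\a \mid \phi_d(u)$, we have $N_i \ge a_i > b_i + c_i \ge b_i$. If $b_j - b_i \ge 2$ for some $j$, Lemma \ref{remarks on order}(5) gives $N_i = b_i$, contradicting $N_i > b_i$; so $b_j \le b_i + 1$ for every $j$. Moreover, for any $j$ with $b_j = b_i + 1$, the vector $\b' = \b + \e_i - \e_j$ still satisfies $\y^{\b'} \mid \phi_d(u)$ (using $b'_i = b_i + 1 \le N_i$), and by Lemma \ref{remarks on order}(4) one has $x_{\b'} \prec_\Gamma x_\b$ whenever $i < j$, contradicting the minimality of $x_\b = \mv_{\prec_\Gamma}(u)$. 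Hence $j < i$ for every such $j$, so at most $s - 1$ indices $j$ have $b_j = b_i + 1$. Summing the inequalities $b_j \le b_i + 1$ yields $d \le s b_i + (s - 1)$, whence
\[
b_i \ge \frac{d - s + 1}{s} \ge \frac{a - 1}{2} + \frac{1}{s} > \frac{a - 1}{2},
\]
so by integrality $b_i \ge \lceil a/2 \rceil$.

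The identical argument applied to $\c = \mv_{\prec_\Gamma}(u/x_\b)$, now using $\deg_{y_i} \phi_d(u/x_\b) = N_i - b_i > c_i$ (which follows from $N_i \ge a_i > b_i + c_i$), gives $c_i \ge \lceil a/2 \rceil$. Adding, $b_i + c_i \ge 2\lceil a/2 \rceil \ge a \ge a_i$, contradicting the assumption. The main technical subtlety is the odd-$a$ case: the cruder bound $b_i \ge (a-1)/2$ alone yields only $b_i + c_i \ge a - 1$, which is short by one when $a$ is odd, and the extra $+1/s$ purchased by the restriction $j < i$ in Lemma \ref{remarks on order}(4) is precisely what upgrades the bound to $\lceil a/2 \rceil$ and makes the hypothesis $d \ge s(a+1)/2$ just sharp enough for this line of reasoning.
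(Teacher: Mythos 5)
Your proof is correct, but it takes a genuinely different route from the paper. The paper argues globally: assuming $b_i+c_i<a_i$ for some $i$, it observes that $\abs{\b+\c}=2d\ge s(a+1)$ forces some $j$ with $b_j+c_j\ge a+2$ by pigeonhole, whence $(b_j-b_i)+(c_j-c_i)\ge 3$, so one of $b_j-b_i\ge 2$ or $c_j-c_i\ge 2$ holds; a single application of Lemma \ref{remarks on order}(5) (to $u$ or to $u/x_\b$, as appropriate) then gives the contradiction. You instead bound $b_i$ and $c_i$ separately: Lemma \ref{remarks on order}(5) and $N_i>b_i$ give $b_j\le b_i+1$ for all $j$, so $d=\sum b_j\le sb_i+(s-1)$ and hence $b_i\ge\lceil a/2\rceil$; the same estimate for $\c$ gives $c_i\ge\lceil a/2\rceil$, and the two are added. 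Both proofs rest on the same key fact (Lemma \ref{remarks on order}(5) plus the counting hypothesis $d\ge s(a+1)/2$), so neither is more general; the paper's pigeonhole is shorter, while your component-wise bound $b_i\ge\lceil a/2\rceil$ is perhaps more transparent about exactly how large each factor of the minimal quadratic must be.

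Two small remarks on your write-up. First, the appeal to Lemma \ref{remarks on order}(4) is extraneous: the estimate $d\le sb_i+(s-1)$ already follows from $b_j\le b_i+1$ for all $j$ together with the trivial fact that the index $j=i$ contributes $b_i$ (not $b_i+1$) to the sum; the stronger conclusion $j<i$ is never used afterwards. Second, and related, the closing commentary attributes the ``extra $+1/s$'' to the restriction $j<i$ from Lemma \ref{remarks on order}(4); in fact it comes from excluding $j=i$, which requires no lemma. The mathematics is sound, but that explanation of where the slack comes from is not.
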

\begin{proof}
First, note that $x_\c$ is well-defined since $x_\b$ divides $u$ by Lemma \ref{mv divides min}. 
Assume, to the contrary, that ${x}_\b x_\c\not\in \phi_d^{-1}(\y^\a)$. 
Since $\y^\a$ does not divide $\y^{\b+\c}=\phi_d(x_\b x_\c)$, we have $b_i+c_i<a_i$ for some $1\le i \le s$. 
On the other hand, since $\abs{\b+\c}=2d\ge s(a+1)$, there exists $1\le j \le s$ ($j\neq i$) such that $b_j+c_j>a+1$. 
Hence $(b_j+c_j)-(b_i+c_i)=(b_j+c_j-a)+(a-b_i+c_i)\ge (b_j+c_j-a)+(a_i-b_i+c_i) \ge 3$. 
Thus we have $b_j-b_i\ge 2$ or $c_j-c_i\ge 2$. 
Since $\y^\a$ divides $\phi_d(u)$ and $b_i+c_i<a_i$, 
the degree of $\phi_d(u)$ in the variable $y_i$ is strictly greater than $b_i$, 
and the degree of $\phi_d(u/x_\b)$ in the variable $y_i$ is strictly greater than $c_i$. 
This contradicts to Lemma \ref{remarks on order} (5). Hence ${x}_\b x_\c\in \phi_d^{-1}(\y^\a)$. 
Since $x_\b x_\c$ divides $u$ if $u\neq x_\b$ by Lemma \ref{mv divides min}, 
$L_{\prec_\Gamma}(\y^\a)$ is generated by quadratic monomials. 
\end{proof}
\begin{Theorem}\label{main monomial}
Let $I\subset S$ be a monomial ideal with a system of generators $\{\y^{\a^{(1)}},\dots,\y^{\a^{(r)}}\}$, 
$\a^{(i)}=(a_{i1}, \dots, a_{is})\in \N^s$, 
and set 
\[
a=\max\{a_{ij}\mid 1\le i\le r,~1\le j\le s \}. 
\]
If $d\ge s(a+1)/2$, then $\initial_{\prec_\Gamma}(\phi_d^{-1}(I))$ is generated by quadratic monomials. 
\end{Theorem}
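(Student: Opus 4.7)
The plan is to assemble the three prior results Theorem~\ref{quad GB}, Lemma~\ref{monomial cases}, and Proposition~\ref{mv in}. By Theorem~\ref{quad GB}, $G_\Gamma$ is a Gr\"obner basis of $\Ker{\phi_d}$ with respect to $\prec_\Gamma$ whose initial terms are quadratic monomials; Lemma~\ref{monomial cases} then gives that $G_\Gamma\cup M_{\prec_\Gamma}(I)$ is a Gr\"obner basis of $\phi_d^{-1}(I)$ with respect to $\prec_\Gamma$. Consequently $\initial_{\prec_\Gamma}(\phi_d^{-1}(I))$ is generated by the quadratic monomials coming from $G_\Gamma$ together with the monomials of $M_{\prec_\Gamma}(I)$, so the theorem reduces to showing that $L_{\prec_\Gamma}(I)$ is generated by quadratic monomials: the unique minimal monomial system $M_{\prec_\Gamma}(I)$ will then automatically consist of monomials of degree at most $2$.

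To establish this, I would take an arbitrary monomial $u\in \M\cap (\phi_d^{-1}(I)\backslash \initial_{\prec_\Gamma}(\Ker{\phi_d}))$ of degree at least $2$ and produce a quadratic divisor of $u$ inside $L_{\prec_\Gamma}(I)$. Since $\phi_d(u)\in I=\langle \y^{\a^{(1)}},\dots,\y^{\a^{(r)}}\rangle$, some generator $\y^{\a^{(i)}}$ divides $\phi_d(u)$, and hence $u\in \phi_d^{-1}(\y^{\a^{(i)}})$. The global hypothesis $d\ge s(a+1)/2$ certainly dominates $s(\max\{a_{ij}\mid 1\le j\le s\}+1)/2$, so Proposition~\ref{mv in} applies to the principal monomial ideal $\langle \y^{\a^{(i)}}\rangle$: setting $x_\b=\mv_{\prec_\Gamma}(u)$ and $x_\c=\mv_{\prec_\Gamma}(u/x_\b)$, the quadratic monomial $x_\b x_\c$ lies in $\phi_d^{-1}(\y^{\a^{(i)}})\subset \phi_d^{-1}(I)$ and divides $u$.

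The final point to check is that $x_\b x_\c\not\in \initial_{\prec_\Gamma}(\Ker{\phi_d})$, so that $x_\b x_\c$ indeed lies in $L_{\prec_\Gamma}(I)$. This is automatic: $\initial_{\prec_\Gamma}(\Ker{\phi_d})$ is a monomial ideal, $x_\b x_\c$ divides $u$, and $u$ is outside this ideal by choice. Overall I expect no serious obstacle, since the proof is a direct synthesis of the three previous results; the only step requiring a little care is this last transfer of non-membership in $\initial_{\prec_\Gamma}(\Ker{\phi_d})$ from $u$ down to its quadratic divisor, which one should record explicitly so that the reduction to the principal-ideal case of Proposition~\ref{mv in} really delivers an element of $L_{\prec_\Gamma}(I)$ and not merely of $\phi_d^{-1}(I)$.
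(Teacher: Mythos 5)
Your proposal is correct and follows essentially the same route as the paper: decompose a monomial $u\in\phi_d^{-1}(I)$ as lying in $\phi_d^{-1}(\y^{\a^{(i)}})$ for some $i$ (equivalently, $L_{\prec_\Gamma}(I)=\sum_i L_{\prec_\Gamma}(\y^{\a^{(i)}})$), apply Proposition~\ref{mv in} to each principal piece, and combine with Lemma~\ref{monomial cases} and Theorem~\ref{quad GB}. The only extra content you supply is the explicit check that the quadratic divisor $x_\b x_\c$ lies outside $\initial_{\prec_\Gamma}(\Ker\phi_d)$ because it divides $u$, a small point the paper leaves implicit.
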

\begin{proof}
Let $u\in \phi_d^{-1}(I)$ be a monomial. Then $u\in \phi_d^{-1}(\y^{\a^{(i)}})$ for some $i$. 
Thus it follows that $L_{\prec_\Gamma}(I)=\sum_{i=1}^r L_{\prec_\Gamma}(\y^{\a^{(i)}})$. 
Hence $M_{\prec_\Gamma}(I)$ consists of quadratic monomials by Proposition \ref{mv in}. 
By Lemma \ref{monomial cases}, the Gr\"obner basis of $\phi_d^{-1}(I)$ with respect to $\prec_\Gamma$ is 
the union of $M_{\prec_\Gamma}(I)$ and $G_\Gamma$ in Theorem \ref{quad GB}. 
This proves the assertion. 
\end{proof}
\subsection{In the case of homogeneous ideals}
Let $I\subset S$ be a homogeneous ideal, 
and fix a weight vector $\omega$ of $S$ such that $\initial_\omega(I)$ is a monomial ideal. 
We denote by $\phi_d^*\omega $ the weight vector on $R^{[d]}$ that assign $\omega\cdot \a$ to the weight of ${x}_\a$ for $\a\in \N_d^s$. 
Since the weight of ${x}_\a$ coincides with the weight of $\y^\a=\phi_d({x}_\a)$, 
$\phi_d$ is a homogeneous homomorphism of degree zero with respect to the graded ring structures on $R^{[d]}$ and $S$ 
defined by $\phi_d^*\omega $ and $\omega$. 
For the simplicity of the notation, 
we regard zero-polynomial as a homogeneous ($\omega$-homogeneous) polynomial of any degree (weight). 
\begin{Lemma}\label{pull back of initial}
With the notation as above, $\initial_{\phi_d^*\omega }(\phi_d^{-1}(I))=\phi_d^{-1}(\initial_\omega(I))$. 
\end{Lemma}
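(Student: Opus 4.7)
The plan is to prove the two inclusions separately. Both ideals $\initial_{\phi_d^*\omega}(\phi_d^{-1}(I))$ and $\phi_d^{-1}(\initial_\omega(I))$ are $\phi_d^*\omega$-homogeneous (the former by definition, the latter because $\phi_d$ is a degree-zero homogeneous map and $\initial_\omega(I)$ is $\omega$-homogeneous), so it suffices to compare $\phi_d^*\omega$-homogeneous elements. The key structural fact I would use repeatedly is that if $f = \sum_v f_v$ is the $\phi_d^*\omega$-weight decomposition of $f \in R^{[d]}$, then $\phi_d(f) = \sum_v \phi_d(f_v)$ is exactly the $\omega$-weight decomposition of $\phi_d(f)$, with $\phi_d(f_v)$ being the $\omega$-weight-$v$ part (possibly zero).

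For the inclusion $\initial_{\phi_d^*\omega}(\phi_d^{-1}(I)) \subseteq \phi_d^{-1}(\initial_\omega(I))$, take $f \in \phi_d^{-1}(I)$ and let $f_w = \initial_{\phi_d^*\omega}(f)$ be its top $\phi_d^*\omega$-weight part. By the structural fact, $\phi_d(f_w)$ is the $\omega$-weight-$w$ component of $\phi_d(f) \in I$: either $\phi_d(f_w) = 0$ (so $f_w \in \Ker\phi_d \subseteq \phi_d^{-1}(\initial_\omega(I))$) or $\phi_d(f_w) = \initial_\omega(\phi_d(f)) \in \initial_\omega(I)$, and in either case $f_w \in \phi_d^{-1}(\initial_\omega(I))$.

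For the reverse inclusion I would take a $\phi_d^*\omega$-homogeneous $g$ of weight $w$ with $\phi_d(g) \in \initial_\omega(I)$ and construct an $f \in \phi_d^{-1}(I)$ with $\initial_{\phi_d^*\omega}(f) = g$ by three coordinated lifts. First, by the standard lifting lemma for initial ideals, pick $h \in I$ with $\initial_\omega(h) = \phi_d(g)$. Second, use the standard-degree homogeneity of $I$: decompose $h$ into standard-degree components and discard those whose degrees are not divisible by $d$. The resulting $\tilde h \in I$ still satisfies $\initial_\omega(\tilde h) = \phi_d(g)$ because the discarded parts contribute no monomials of $\omega$-weight $w$ (the weight-$w$ monomials of $h$ form $\phi_d(g)$, all of which have degree divisible by $d$), and $\tilde h$ now lies in the image of $\phi_d$. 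Third, choose $F \in R^{[d]}$ with $\phi_d(F) = \tilde h$; the components $F_v$ with $v > w$ automatically lie in $\Ker\phi_d$ (since $\initial_\omega(\tilde h)$ has $\omega$-weight $w$), so $\hat F := \sum_{v \le w} F_v$ still satisfies $\phi_d(\hat F) = \tilde h$, and its weight-$w$ part $F_w$ satisfies $\phi_d(F_w) = \phi_d(g)$. Setting $f := \hat F - F_w + g$ then yields $\phi_d(f) = \tilde h - \phi_d(g) + \phi_d(g) = \tilde h \in I$, and the top-weight part of $f$ is $g$, so $\initial_{\phi_d^*\omega}(f) = g$ as required. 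The main obstacle is coordinating these three lifts so that the $\omega$-initial form, the divisibility of standard degrees by $d$, and the top $\phi_d^*\omega$-weight part of the final pre-image are all correct simultaneously; the two enabling ingredients are the standard-degree homogeneity of $I$ (which lets us throw away components of the wrong degree without changing the initial form) and the fact that $\phi_d$ is degree-zero for the weight gradings (which lets us match $\phi_d^*\omega$-weights of pre-images to $\omega$-weights of images).
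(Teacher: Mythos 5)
Your proof is correct, and for the reverse inclusion it takes a genuinely different (and arguably cleaner) route than the paper. Where the paper builds the lift with the degree constraint already satisfied — choosing a standard-homogeneous Gr\"obner basis $\{f_1,\dots,f_r\}$ of $I$ with respect to $\omega$, expressing $\phi_d(g)=\sum h_i\,\initial_\omega(f_i)$ with the $h_i$ simultaneously homogeneous of the right degree and $\omega$-homogeneous of the right weight, and then setting $q=\sum h_i f_i$ so that $q\in I$ has standard degree $d\ell$ and $\initial_\omega(q)=\phi_d(g)$ all at once — you instead lift freely and repair afterwards: invoke the lifting lemma to get any $h\in I$ with $\initial_\omega(h)=\phi_d(g)$, discard the standard-degree components not divisible by $d$ (valid because $I$ is homogeneous, and harmless because every weight-$w$ monomial of $h$ is a monomial of $\phi_d(g)$, hence of degree divisible by $d$), then pull back and discard the $\phi_d^*\omega$-weights above $w$, and finally splice $g$ in as the top weight piece. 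The two arguments buy different things: the paper's construction is self-contained and produces the correct lift in one shot, at the cost of some bookkeeping on the $h_i$; yours isolates the only nontrivial input as the standard fact that an $\omega$-homogeneous element of $\initial_\omega(I)$ is of the form $\initial_\omega(h)$ for some $h\in I$, and then does two elementary graded truncations. One small thing worth flagging: that lifting fact, while standard, does deserve a citation or a two-line argument (e.g., from $\initial_\omega(I)$ being $K$-spanned by $\{\initial_\omega(f)\mid f\in I\}$ together with taking weight components), since it is the load-bearing step you are outsourcing. On the forward inclusion your treatment is actually more careful than the paper's: you explicitly separate the case $\phi_d(\initial_{\phi_d^*\omega}(g))=0$, where the naive identity $\phi_d(\initial_{\phi_d^*\omega}(g))=\initial_\omega(\phi_d(g))$ fails but the conclusion still holds because the top-weight part then lies in $\Ker\phi_d$.
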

\begin{proof}
First, 
note that $\initial_{\phi_d^*\omega }(\phi_d^{-1}(I))$ and $\phi_d^{-1}(\initial_\omega(I))$ are both homogeneous and $\phi_d^*\omega $-homogeneous. 
Since $\phi_d$ sends $\phi_d^*\omega $-homogeneous polynomials to $\omega$-homogeneous polynomials, 
we have $\phi_d(\initial_{\phi_d^*\omega }(g))=\initial_\omega(\phi_d(g))$ for all $g\in R^{[d]}$. 
Hence it follows that $\initial_{\phi_d^*\omega }(\phi_d^{-1}(I))\subset \phi_d^{-1}(\initial_\omega(I))$. 

We will prove the converse inclusion. 
Let $\{f_1,\dots,f_r\}$ be a Gr\"obner basis of $I$ with respect to $\omega$ consisting of homogeneous polynomials. 
Let $g\in \phi_d^{-1}(\initial_\omega(I))$ be a homogeneous and $\phi_d^*\omega $-homogeneous polynomial. 
We set $\ell$ and $m$ to be the degree and the weight of $g$. 
Then $\phi_d(g)$ is a homogeneous and $\omega $-homogeneous polynomial of degree $d\ell$ and of weight $m$. 
Since $\phi_d(g)\in \initial_\omega(I)$, 
there exist homogeneous and $\omega$-homogeneous polynomials $h_1,\dots,h_r$ 
such that 
$\phi_d(g)=\sum_{i=1}^{r} h_i \cdot \initial_\omega(f_i)$,  
and $h_i \cdot\initial_\omega(f_i)$ is of degree $d\ell$ and of weight $m$. 
We set $q=\sum_{i=1}^{r} h_i f_i$. 
Then $q$ is a homogeneous polynomial of degree $d\ell$ satisfying $q\in I$, and 
$\initial_\omega(q)=\sum_{i=1}^{r} h_i \cdot \initial_\omega(f_i)=\phi_d(g)$. 
We write $q=\initial_\omega(q)+\sum_{i<m}q_i$ where $q_i$ is a homogeneous and $\phi_d^*\omega $-homogeneous polynomial 
of degree $d\ell$ and of weight $i$. 
For $i<m$, there exists $g_i\in R^{[d]}$ a homogeneous and $\omega$-homogeneous polynomial of degree $\ell$ and of weight $i$ 
such that $\phi_d(g_i)=q_i$. 
Then $\phi_d(g+\sum_{i<m}g_i)=q$ and $\initial_{\phi_d^*\omega }(g+\sum_{i<m}g_i)=g$. 
Therefore we have $g\in \initial_{\phi_d^*\omega }(\phi_d^{-1}(I))$. 
\end{proof}
Now, we are ready to prove the main theorem of this paper. 
\begin{Theorem}\label{main}
Let $I\subset R^{[d]}$ be a homogeneous ideal, 
and fix a weight vector $\omega$ of $S$ such that $\initial_\omega(I)$ is a monomial ideal. 
Let $\{\y^{\a^{(1)}},\dots,\y^{\a^{(r)}}\}$, 
$\a^{(i)}=(a_{i1}, \dots, a_{is})\in \N^s$, be the minimal system of generators of $\initial_\omega(I)$ and set 
\[
a=\max\{a_{ij}\mid 1\le i\le r,~1\le j\le s \}. 
\] 
Let $\prec_{\Gamma_\omega}$ be the term order on $R^{[d]}$ constructed from $\phi_d^*\omega $ with $\prec_\Gamma$ a tie-breaker as in Definition \ref{w order}. 
Then $\initial_{\prec_{\Gamma_\omega}}(\phi_d^{-1}(I))$ is generated by quadratic monomials if $d\ge s(a+1)/2$. 
\end{Theorem}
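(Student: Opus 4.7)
The plan is to reduce the general homogeneous case to the monomial ideal case already handled in Theorem \ref{main monomial}, via the two structural results on weight-compatible initial ideals that have just been set up. The weight $\phi_d^*\omega$ on $R^{[d]}$ was introduced precisely so that $\phi_d$ intertwines the weight gradings on $R^{[d]}$ and on $S$, a fact already formalized in Lemma \ref{pull back of initial}; meanwhile, Proposition \ref{w} is designed to let us pass from a mixed term order built out of a weight and a tie-breaker to the weighted initial ideal first, then apply the tie-breaker.

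Concretely, I would apply Proposition \ref{w} to $\phi_d^{-1}(I)\subset R^{[d]}$ with the weight $\phi_d^*\omega$ and the tie-breaker $\prec_\Gamma$; since, by Definition \ref{w order}, the combined order is exactly $\prec_{\Gamma_\omega}$, this gives
\[
\initial_{\prec_{\Gamma_\omega}}\bigl(\phi_d^{-1}(I)\bigr)=\initial_{\prec_\Gamma}\bigl(\initial_{\phi_d^*\omega}(\phi_d^{-1}(I))\bigr).
\]
Next I would rewrite the inner initial ideal using Lemma \ref{pull back of initial}, obtaining
\[
\initial_{\prec_{\Gamma_\omega}}\bigl(\phi_d^{-1}(I)\bigr)=\initial_{\prec_\Gamma}\bigl(\phi_d^{-1}(\initial_\omega(I))\bigr).
\]
Since $\initial_\omega(I)$ is a monomial ideal whose minimal generators $\y^{\a^{(i)}}$ have exponent entries bounded by $a$, Theorem \ref{main monomial} applied with $d\ge s(a+1)/2$ shows that the right-hand side is generated by quadratic monomials, proving the claim.

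I do not foresee any substantial obstruction here: the entire combinatorial content, namely the properties of $\mv_{\prec_\Gamma}$ and the quadratic-generation bound $d\ge s(a+1)/2$, has already been absorbed into Proposition \ref{mv in} and Theorem \ref{main monomial}, while the interaction between $\omega$ and $\phi_d$ is handled completely by Lemma \ref{pull back of initial}. The only verification needed when writing the proof out is the one-line check that the mixed order $(\prec_\Gamma)_{\phi_d^*\omega}$ from Definition \ref{w order} is exactly what the theorem denotes $\prec_{\Gamma_\omega}$, which is a pure naming matter.
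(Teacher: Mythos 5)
Your proposal is correct and follows the paper's proof exactly: apply Proposition \ref{w} to rewrite $\initial_{\prec_{\Gamma_\omega}}(\phi_d^{-1}(I))$ as $\initial_{\prec_\Gamma}(\initial_{\phi_d^*\omega}(\phi_d^{-1}(I)))$, use Lemma \ref{pull back of initial} to replace the inner ideal with $\phi_d^{-1}(\initial_\omega(I))$, and conclude by Theorem \ref{main monomial} since $\initial_\omega(I)$ is a monomial ideal with exponent entries bounded by $a$.
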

\begin{proof}
By Proposition \ref{w} and Lemma \ref{pull back of initial}, we have 
\[
\initial_{\prec_{\Gamma_\omega}}(\phi_d^{-1}(I))=\initial_{\prec_\Gamma}(\initial_{\phi_d^*\omega }(\phi_d^{-1}(I)))
=\initial_{\prec_\Gamma}(\phi_d^{-1}(\initial_{\omega}(I))). 
\]
Since $\initial_{\omega}(I)$ is a monomial ideal, the assertion follows from Theorem \ref{main monomial}
\end{proof}
\begin{Observation}
Let the notation be as in Theorem \ref{main}. 
We will compare our lower bound on $d$ with Eisenbud--Reeves--Totaro's lower bound. 
We set $\delta(\initial_\omega(I))=\max\{a_{i1}+\dots+a_{is}\mid 1\le i\le r\}$. 

Eisenbud--Reeves--Totaro \cite{ERT} proved that $\phi_d^{-1}(I)$ has quadratic initial ideal 
for $d\ge \reg(I)/2$ in the case where the coordinates $y_1,\dots,y_s$ of $S$ are generic. 
Our lower bound $s(a+1)/2$ seems large compared with $\reg(I)/2$, but is easy to compute. 
Eisenbud--Reeves--Totaro also gave a easily computable rough lower bound $(s\delta(\initial_\omega(I))-s+1)/2$. 
Our lower bound is less than Eisenbud--Reeves--Totaro's rough lower bound if and only if $a+2\le \delta(\initial_\omega(I))$. 
Thus there exist a lot of examples in which our lower bound is less than Eisenbud--Reeves--Totaro's rough lower bound. 

If the coefficient field $K$ is finite, or we are interested in Gr\"obner bases consisting of binomials, 
we can not deal with generic coordinates.  
Eisenbud--Reeves--Totaro stated without a proof that $\phi_d^{-1}(I)$ has quadratic initial ideal 
if $d\ge s\lceil \delta(\initial_\omega(I))/2\rceil$ 
(with $\prec$ and coordinates $y_1,\dots,y_s$ chosen so that $\delta(\initial_\prec(I))$ is minimal, 
see \cite{ERT} comments after Theorem 11). 
If $\delta(\initial_\prec(I))$ is odd, our lower bound is always not greater than Eisenbud--Reeves--Totaro's lower bound 
$s\lceil \delta(\initial_\prec(I))/2\rceil$. 
If $\delta(\initial_\prec(I))$ is even, our lower bound is greater than Eisenbud--Reeves--Totaro's lower bound 
only in the case where the inequality $a\ge \delta(\initial_\omega(I))$ holds. 
This inequality holds if and only if there exist $1\le i\le r$, $1\le j\le s$ and $N\in \N$ such that 
$\y^{\a^{(i)}}=y_j^N$ and $\deg \y^{\a^{(k)}}\le N$ for all $1\le k \le r$ which does not occur very often. 
\end{Observation}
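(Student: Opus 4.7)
The Observation collects four elementary comparisons between the bound $s(a+1)/2$ from Theorem \ref{main} and the two Eisenbud--Reeves--Totaro bounds $(s\delta-s+1)/2$ and $s\lceil \delta/2\rceil$, where $\delta=\delta(\initial_\omega(I))$. The plan is to dispatch all of them by direct integer arithmetic, after first recording the single structural input: the universal inequality $a\le \delta$. This holds because for every generator $\y^{\a^{(i)}}$ and every index $j$, one has $a_{ij}\le a_{i1}+\dots+a_{is}\le \delta$, so $a=\max_{i,j} a_{ij}\le \delta$ follows from the definitions of $a$ and $\delta$.

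For the comparison with the rough ERT bound, I will rewrite the strict inequality $s(a+1)/2<(s\delta-s+1)/2$ as $s(a+2-\delta)<1$. Since $a+2-\delta\in\Z$ and $s\ge 1$, this is equivalent to the integer inequality $a+2\le \delta$, yielding the first "if and only if" of the Observation. For the comparison with $s\lceil \delta/2\rceil$ I will split on the parity of $\delta$. When $\delta$ is odd, $s\lceil \delta/2\rceil=s(\delta+1)/2$, so $s(a+1)/2\le s\lceil \delta/2\rceil$ reduces to $a\le \delta$, which always holds. When $\delta$ is even, $s\lceil \delta/2\rceil=s\delta/2$, and $s(a+1)/2>s\delta/2$ is equivalent to $a+1>\delta$, i.e.\ (since both sides are integers) to $a\ge \delta$; combined with the universal $a\le \delta$, this forces $a=\delta$.

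The remaining task is to characterize the equality $a=\delta$. If $a_{ij}=\delta$ for some pair $(i,j)$, then the chain $a_{ij}\le \sum_l a_{il}\le \delta=a_{ij}$ forces $a_{il}=0$ for all $l\ne j$, so $\y^{\a^{(i)}}=y_j^N$ for $N:=\delta$, and by the very definition of $\delta$ every generator $\y^{\a^{(k)}}$ has degree at most $N$. Conversely, if some minimal generator has the form $y_j^N$ and every other generator has degree at most $N$, then $\delta=N=a_{ij}\le a$, and combined with $a\le\delta$ this gives $a=\delta$. The only point requiring care in the whole argument is the handling of strict versus non-strict integer inequalities after clearing denominators by $2$; the integrality of $a+2-\delta$ and $a+1-\delta$ is what makes the characterizations "$a+2\le\delta$" and "$a\ge\delta$" collapse cleanly, rather than producing off-by-one annoyances, and I expect this bookkeeping to be the only mild obstacle.
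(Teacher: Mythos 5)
Your verification is correct: the universal bound $a\le\delta$, the clearing of denominators with attention to integrality (giving the equivalences $a+2\le\delta$ and, in the even case, $a\ge\delta$, hence $a=\delta$), and the characterization of $a=\delta$ via a generator of the form $y_j^N$ all check out. The paper states the Observation without an explicit proof, and your elementary arithmetic argument is exactly the intended justification, so there is nothing further to add.
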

Applying Theorem \ref{main} to toric ideals, we obtain the next theorem. 
\begin{Theorem}\label{main toric}
With the notation as in the introduction, $P_{\A^{(d)}}$ admits a quadratic Gr\"obner basis for sufficiently large $d$. 
\end{Theorem}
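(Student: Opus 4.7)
The plan is to derive Theorem \ref{main toric} as a direct consequence of Theorem \ref{main}, applied to the homogeneous ideal $I = P_\A$ of $S = K[y_1, \dots, y_s]$. The whole argument rests on identifying the toric ideal $P_{\A^{(d)}}$ with the preimage $\phi_d^{-1}(P_\A)$, after which Theorem \ref{main} applies directly.

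For the identification, observe that the defining homomorphism $\phi_{\A^{(d)}} : R^{[d]} \to K[z_1^{\pm 1}, \dots, z_n^{\pm 1}]$, which sends $x_\a \mapsto \z^{a_1 \m^{(1)} + \dots + a_s \m^{(s)}}$ for $\a \in \N_d^s$, factors as $\phi_\A \circ \phi_d$, since $\phi_d(x_\a) = \y^\a$ and $\phi_\A(\y^\a) = \z^{a_1 \m^{(1)} + \dots + a_s \m^{(s)}}$. Therefore $P_{\A^{(d)}} = \Ker(\phi_\A \circ \phi_d) = \phi_d^{-1}(\Ker \phi_\A) = \phi_d^{-1}(P_\A)$.

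To apply Theorem \ref{main}, fix any term order on $S$; the initial ideal of $P_\A$ then has a finite minimal monomial generating set $\{\y^{\a^{(1)}}, \dots, \y^{\a^{(r)}}\}$ with $\a^{(i)} = (a_{i1}, \dots, a_{is}) \in \N^s$. Setting $a = \max\{a_{ij} \mid 1 \le i \le r,\, 1 \le j \le s\}$ and choosing, by the last proposition of Section 2, a weight vector $\omega$ on $S$ that induces this initial ideal, Theorem \ref{main} produces, for every $d \ge s(a+1)/2$, a term order $\prec_{\Gamma_\omega}$ on $R^{[d]}$ with respect to which $\initial_{\prec_{\Gamma_\omega}}(\phi_d^{-1}(P_\A))$ is generated by quadratic monomials. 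In particular $P_{\A^{(d)}} = \phi_d^{-1}(P_\A)$ admits a quadratic Gr\"obner basis for all such $d$; and since $P_{\A^{(d)}}$ is a binomial ideal, its reduced quadratic Gr\"obner basis automatically consists of binomials, recovering the form of the statement emphasized in the abstract.

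There is essentially no obstacle beyond the identification $\phi_{\A^{(d)}} = \phi_\A \circ \phi_d$; all of the substance is already packaged inside Theorem \ref{main}, which in turn is built on the monomial-ideal case (Theorem \ref{main monomial}) and the quadratic Gr\"obner basis for $\Ker \phi_d$ given by Theorem \ref{quad GB}.
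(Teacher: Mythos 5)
Your proposal is correct and takes exactly the route the paper intends: the paper's own proof is simply the statement ``Applying Theorem \ref{main} to toric ideals, we obtain the next theorem,'' and you supply the only nontrivial ingredient, namely the factorization $\phi_{\A^{(d)}}=\phi_\A\circ\phi_d$ giving $P_{\A^{(d)}}=\phi_d^{-1}(P_\A)$, together with the observation that the reduced Gr\"obner basis of a toric ideal consists of binomials.
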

\begin{Remark}
It is easy to show that if $I$ admits a squarefree initial ideal, 
then $\phi_d^{-1}(I)$ also admits a squarefree initial ideal using Lemma \ref{monomial cases} and Lemma \ref{pull back of initial} 
(the lexicographic order in \cite{De Negri} gives squarefree initial ideal of $\Ker{\phi_d}$, and 
if $I$ is a squarefree monomial ideal then so is $L_{\prec}(I)$ in Definition \ref{def of L(I)} for any term order $\prec$). 
However, $\initial_{\prec_\Gamma}(\Ker{\phi_d})$ is not squarefree, and it seems to be an open question 
whether $\phi_d^{-1}(I)$ admits a quadratic squarefree initial ideal if $I$ has a squarefree initial ideal. 
\end{Remark}


\begin{thebibliography}{99}
\bibitem{B}
J. Backelin, On the rates of growth of the homologies of Veronese subrings, 
in Algebra, Algebraic Topology, and Their Interactions, ed. J.-E. Roos, Springer Lect. Notes in Math.
1183 (1986), 79--100. 
\bibitem{BaMa}
S. Barcanescu and N. Manolache, Betti numbers of Segr\'e-Veronese singularities, Rev. Roum. Math. Pures Appl. 26 (1982), 549--565. 
\bibitem{BGT}
W. Bruns, J. Gubeladze, N. V. Trung, Normal polytopes, triangulations, and Koszul algebras. J. Reine Angew. Math. 485 (1997), 123--160. 
\bibitem{Cox1} 
D. Cox, J. Little, D. O'Shea, \textit{Ideals, Varieties and Algorithms,} Springer-Verlag, 1992.
\bibitem{Cox2} 
D. Cox, J. Little, D. O'Shea, \textit{Using Algebraic Geometry,} Springer-Verlag, 1998. 
\bibitem{De Negri} 
E. De Negri, Toric rings generated by special stable sets of monomials, 
Math. Nachr. 203 (1999), 31--45. 
\bibitem{ERT}
D. Eisenbud, A. Reeves, B. Totaro, Initial ideals, Veronese subrings, and rates of algebras, Adv. Math. 109 (2), (1994), 168--187. 
\bibitem{Fr}R. Fr\"oberg, Determination of a class of Poincar\'e series, Math. Scand. 37 (1975), 29--39. 
\bibitem{Sturmfels} 
B. Sturmfels, \textit{Gr\"obner Bases and Convex Polytopes}, Univ. Lecture Ser., vol. 8, Amer.Math. Soc., Providence, 1996.
\end{thebibliography}
\end{document}